\newcommand{\subsectionruninhead}{\@startsection{subsection}{2}{0mm}
{-\baselineskip}{-0mm}{\bf\large}}
\newcommand{\subsubsectionruninhead}{\@startsection{subsubsection}{3}{0mm}
{-\baselineskip}{-0mm}{\bf\normalsize}}
\newtheorem*{theorem*}{Theorem}
\newtheorem*{proof*}{Proof}
\newtheorem*{proposition*}{Proposition}
\newtheorem*{corollary*}{Corollary}
\newtheorem*{claim*}{Claim}
\newtheorem*{remark*}{Remark}
\newtheorem{theorem}{Theorem}[section]
\newtheorem{proposition}{Proposition}[section]
\newtheorem{corollary}[proposition]{Corollary}
\newtheorem{lemma}[proposition]{Lemma}
\theoremstyle{definition}
\newtheorem{definition}[proposition]{Definition}
\theoremstyle{remark}
\newtheorem{remark}[proposition]{Remark}
\newtheorem{question}[proposition]{Question}
\numberwithin{equation}{section}
\def\NN{\mathbb{N}}
\def\RR{\mathbb{R}}
\def\CC{\mathbb{C}}
\def\TT{\mathbb{T}}
\def\ZZ{\mathbb{Z}}
\def\QQ{\mathbb{Q}}
\def\d0{\delta^{(0)}}
\def\orb{\mathrm{Orb}}
\def\modd{\mathrm{mod}}
\begin{document}
\title{Uniformly distributed periodic orbits of endomorphisms on $n$-tori}
\author{Daohua Yu and Shaobo Gan}
\date{\today}
\maketitle
\begin{abstract}
We prove that any ergodic endomorphism on an $n$-torus admits a sequence of periodic orbits uniformly distributed in the metric sense. As a corollary, an endomorphism on the torus is ergodic if and only if the Haar measure can be approximated by periodic measures.
\end{abstract}
\section{Introduction}
\indent For any natural number $n\ge 1$, denote by $M_n(\ZZ)$ the set of $n\times n$ matrices with entries in $\ZZ$. Any $A\in M_n(\ZZ)$ can be naturally considered as a linear map from $\RR^n$ to $\RR^n$, i.e., $v\mapsto Av$; since $A(\ZZ^n)\subset \ZZ^n$, $A$ also induces a map from the $n$-torus $\TT^n=\RR^n/\ZZ^n$ to itself, i.e., $v\mapsto Av \mod 1$, still denoted by $A$. If $\det(A)\not=0, $ such a map will be called an {\em endomorphism} on the $n$-torus. The best known endomorphism may be Arnold's cat map on the 2-torus, which is induced by $A=\left[\begin{array}{cc} 2&1\\ 1&1\end{array}\right].$ Related to the sophisticated construction of Birkhoff sections in his thesis \cite{Shi}, Y. Shi raised the following question:

\begin{question}[Shi's question] Does Arnold's cat map admit a sequence of periodic orbits $\{O_k\}_{k\in\NN}$ such that
$$
d(O_k)^2 T(O_k)\ge C
$$
for some constant $C>0$ $?$ Here $T(O_k)$ denotes the period of $O_k$ and
$$
d(O_k)=\min\{d(x,y): x, y\in O_k, x\not=y\}.
$$
\end{question}

In this paper, we will give an affirmative answer to Shi's question. In fact, we will prove a slightly general result, i.e., similar periodic orbits exist for any ergodic torus endomorphism. Here, ``ergodic'' means that the endomorphism is ergodic with respect to Leb, the Haar measure on the torus.

\begin{theorem}\label{metric}
Let $A$ be an ergodic endomorphism on an $n$-torus. Then there is a positive number $C$  and a sequence of periodic orbits $\{O_k\}_{k\in\NN}$, such that
$$ d(O_k)^nT(O_k)\ge C \qquad \text{and}\qquad \lim_{k\to +\infty}T(O_k)=+\infty.\eqno (*)$$
\end{theorem}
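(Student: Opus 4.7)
The plan is to produce, for $T$ ranging through a sequence tending to infinity, a periodic orbit $O$ of period exactly $T$ whose distinct points are pairwise separated by at least $c\,T^{-1/n}$, with $c>0$ independent of $T$; this gives $d(O)^n T(O) \ge c^n$.

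The setup uses the finite subgroup $H_T := \mathrm{Fix}(A^T) \subset \TT^n$, which has order $N_T := |\det(A^T - I)|$. Ergodicity rules out roots of unity among the eigenvalues of $A$, so $N_T \ge 1$ for every $T$; moreover Kronecker's theorem forces at least one eigenvalue $\lambda$ with $|\lambda|>1$, and combined with Baker/Roth-type lower bounds $|\lambda_i^T - 1| \gtrsim T^{-K}$ for the unit-modulus algebraic, non-root-of-unity eigenvalues, this yields $N_T \gtrsim \Lambda^T T^{-K'}$ for some $\Lambda>1$. I would restrict to prime $T$, so that every orbit inside $H_T$ has length either $1$ (namely points of $H_1 := \mathrm{Fix}(A)$, of bounded size $N_1$) or exactly $T$.

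The main step is a pigeonhole count. Set $\varepsilon := c\,T^{-1/n}$ and call $x \in H_T$ \emph{bad} if $|(A^k - I)x|_{\TT^n} < \varepsilon$ for some $1 \le k < T$. For $T$ prime, each restriction $(A^k - I)|_{H_T}$ is a group homomorphism with kernel $H_{\gcd(k,T)} = H_1$, so the number of bad $x$ for a fixed $k$ is at most $N_1 \cdot |B_\varepsilon \cap H_T|$, and a union bound gives
\[
\#\{\text{bad } x\} \;\le\; (T-1)\,N_1\cdot |B_\varepsilon \cap H_T|.
\]
Since every element of $H_1$ is automatically bad, any good $x$ lies in $H_T\setminus H_1$ and therefore generates an orbit of length exactly $T$ with pairwise distances $\ge \varepsilon$; so it suffices to show that the right-hand side is $< N_T$.

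The main obstacle is the quantitative equidistribution estimate
\[
|B_\varepsilon \cap H_T| \;\lesssim\; \varepsilon^n N_T
\]
valid for $\varepsilon \gtrsim T^{-1/n}$. The natural route is Pontryagin duality: the annihilator of $H_T$ in $\ZZ^n = \widehat{\TT^n}$ is the sublattice $(A^T - I)^{\mathsf T}\ZZ^n$, so summing a smoothed indicator of $B_\varepsilon$ over $H_T$ collapses by character orthogonality to a sum of Fourier coefficients over this dual lattice, with principal term $N_T\,\mathrm{vol}(B_\varepsilon) \sim \varepsilon^n N_T$; the error is governed by short vectors in $(A^T - I)^{\mathsf T}\ZZ^n$, which must be controlled through the Smith normal form of $A^T - I$ and the Diophantine/algebraic properties of the eigenvalues of $A$ --- this is where ergodicity is used a second, quantitative time. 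Granted this estimate, choosing $c$ small enough in terms of $N_1$ and letting $T$ range through large primes forces the bad count below $N_T$, delivering the required sequence $\{O_k\}$.
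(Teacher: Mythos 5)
Your approach is genuinely different from the paper's: the paper constructs periodic orbits explicitly inside the groups $p^{-k}\ZZ^n/\ZZ^n$ for primes $p$ furnished by the Frobenius density theorem, controls the separation via a Vandermonde-type wedge-product estimate, and controls the period via the Lifting the Exponent lemma applied to a linear recurrence; you instead work inside $\mathrm{Fix}(A^T)$ for prime $T$ and run a pigeonhole argument. The pigeonhole bookkeeping is clean and correct: for prime $T$ every nontrivial orbit in $H_T$ has length exactly $T$, each restriction $(A^k-I)|_{H_T}$ has kernel $H_1$ of bounded size, and the union bound reduces everything to comparing $(T-1)N_1\,|B_\varepsilon\cap H_T|$ with $N_T$.

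The genuine gap is exactly where you flag it, but it is more serious than the phrasing suggests. The estimate $|B_\varepsilon\cap H_T|\lesssim \varepsilon^n N_T$ at scale $\varepsilon\asymp T^{-1/n}$ is, by Poisson summation, equivalent to a lower bound $\ell_1\bigl((A^T-I)^{\mathsf T}\ZZ^n\bigr)\gtrsim T^{1/n}$ on the shortest nonzero vector of the dual lattice. That is \emph{not} implied by the smallest singular value of $A^T-I$: when $A$ has eigenvalues on the unit circle, $\sigma_{\min}(A^T-I)$ can be as small as $T^{-K}$ (Baker only gives a polynomial lower bound, in the wrong direction). What saves the estimate is that integer vectors $w$ cannot align too well with the eigenspaces that $(A^T-I)^{\mathsf T}$ contracts, and quantifying this requires Diophantine control of those eigenspaces: a quadratic-irrational bound when $n=2$, but Schmidt's subspace theorem (with its attendant ineffectivity) for irreducible characteristic polynomials of higher degree, and additional care in the reducible case where several invariant rational subspaces interact. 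None of this is routine, and nothing in the sketch indicates how to organize it; in particular the constant $C$ you need in $|B_\varepsilon\cap H_T|\le C\varepsilon^n N_T$ must be uniform over an infinite set of $T$, which is exactly the kind of uniformity the subspace theorem does not hand you for free. So the plan is plausible but the central lemma is unproved and is, in my judgment, at least as hard as the number-theoretic route the paper actually takes.

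Two smaller remarks. First, your lower bound $N_T\gtrsim \Lambda^T T^{-K'}$ via Kronecker plus Baker is fine, but it is not actually needed in the final comparison: once $|B_\varepsilon\cap H_T|\le C\varepsilon^n N_T$, the factor $N_T$ cancels, so the growth of $N_T$ only matters to guarantee $T(O_k)\to\infty$, for which $N_T\to\infty$ suffices. Second, you should check that a good $x$ does not land in $H_1$; your union bound already absorbs $H_1$ into the bad set, so this is fine, but it is worth saying explicitly.
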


\begin{remark}
For any periodic orbit $O$ with period $T$, according to the definition of $d(O)$ we know that $\bigcup_{x\in O} B(x,\frac{d(O)}{2})$ are pairwise disjoint. We have
$$
T(O)\left(\frac{d(O)}{2}\right)^n\omega_n\leq 1,
$$
where $\omega_n$ is the volume of $n$ dimensional unit ball.
So there is a constant $C'=\frac{2^n}{\omega_n}$ independent of $O$ such that $d(O)^nT(O)\leq C'$. Thus, $n$ is the largest exponential index such that the equation $(*)$ holds.
\end{remark}

\begin{definition}\label{UD}
A sequence of periodic orbits $\{O_k\}_{k\in\NN}$ satisfying $(*)$ will be called {\em uniformly distributed} (in the metric sense).
\end{definition}

Recall that a sequence of points $\{x_k\}_{k\in\NN}$ in an $n$-torus is called {\em uniformly distributed} if $$\frac{1}{k}\sum_{i=1}^k\delta_{x_i} \xrightarrow{\text{weak*}} Leb\quad (k\to +\infty).$$  The following proposition justifies  Definition \ref{UD}.

\begin{proposition}\label{transfer metric to measure}
Let $A$ be an ergodic endomorphism on an $n$-torus and $\{O_k\}_{k\in\NN}$ be a sequence of periodic orbits of $A$.
Denote the periodic measure $\mu_k=T(O_k)^{-1}\sum_{x\in O_k}\delta_x$.  If $\{O_k\}_{k\in\NN}$ is uniformly distributed, then  $\mu_k$ weak* converges to Leb.
\end{proposition}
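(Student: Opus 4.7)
The plan is to show that every weak* subsequential limit of $\{\mu_k\}_{k\in\NN}$ equals Leb; since the space of Borel probability measures on $\TT^n$ is weak* compact, this will force $\mu_k\to\text{Leb}$. First I would note that each $\mu_k$ is $A$-invariant (because $A$ cyclically permutes the points of $O_k$), and the set of $A$-invariant probability measures is weak* closed, so every subsequential limit $\mu$ is $A$-invariant.

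Next I would upgrade the packing observation in the Remark to a uniform upper bound on $\mu_k$ of small balls. Fix $x\in\TT^n$ and $0<r<1/4$, and take $k$ large enough that $d(O_k)/2<r$. Since the balls $B(y,d(O_k)/2)$ with $y\in O_k\cap B(x,r)$ are pairwise disjoint and contained in $B(x,r+d(O_k)/2)$, a volume comparison gives
$$
\mu_k(B(x,r))=\frac{|O_k\cap B(x,r)|}{T(O_k)}\le \frac{(2r+d(O_k))^n}{d(O_k)^n T(O_k)}\le \frac{(2r+d(O_k))^n}{C}.
$$
Because $T(O_k)\to+\infty$ forces $d(O_k)\to 0$, the Portmanteau theorem applied to the open set $B(x,r)$ yields $\mu(B(x,r))\le (2r)^n/C$ for every such ball. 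Since any Lebesgue-null set can be covered by balls of arbitrarily small total Lebesgue measure, this shows $\mu\ll\text{Leb}$, with Radon--Nikodym derivative bounded by $2^n/(C\omega_n)$.

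Finally I would conclude by ergodicity. By the Birkhoff ergodic theorem applied to the ergodic system $(\TT^n,A,\text{Leb})$, for every continuous $f:\TT^n\to\RR$ the averages $\frac{1}{N}\sum_{i=0}^{N-1}f(A^i x)$ converge to $\int f\,d\text{Leb}$ for Leb-a.e.\ $x$, and hence for $\mu$-a.e.\ $x$ by absolute continuity. Integrating against the $A$-invariant measure $\mu$ and applying bounded convergence then gives $\int f\,d\mu=\int f\,d\text{Leb}$ for every continuous $f$, so $\mu=\text{Leb}$.

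The main obstacle is the middle step: translating the metric separation $d(O_k)^nT(O_k)\ge C$ into a measure-theoretic density bound that survives the weak* limit. The refined packing estimate is the heart of the argument, essentially a strengthening of the volume count already carried out in the Remark. Once absolute continuity of the limit is in hand, deducing $\mu=\text{Leb}$ from $A$-invariance of $\mu$ and ergodicity of Leb is standard ergodic theory.
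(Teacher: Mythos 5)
Your proof is correct and follows essentially the same route as the paper: extract a weak* subsequential limit $\mu$, show $\mu\ll\mathrm{Leb}$ via a packing estimate powered by $d(O_k)^nT(O_k)\ge C$, then invoke ergodicity of Leb to conclude $\mu=\mathrm{Leb}$. The only cosmetic differences are that the paper works with half-open rectangles and the Carath\'eodory extension rather than balls and Portmanteau, and it cites the Cornfeld--Fomin--Sinai theorem (absolutely continuous invariant measure under an ergodic one must coincide with it) in place of your direct Birkhoff argument.
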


As a corollary, we obtain the following periodic approximation result.
\begin{corollary}\label{iff}
An endomorphism  $A$ on the torus is ergodic if and only if $A$ has a sequence of periodic measures weak* converging to Leb.
\end{corollary}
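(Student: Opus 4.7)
The forward implication is immediate: Theorem~\ref{metric} produces, for any ergodic $A$, a uniformly distributed sequence of periodic orbits, and Proposition~\ref{transfer metric to measure} promotes this into weak*-convergence of the associated periodic measures to Leb. So the real content lies in the converse, which I plan to prove by contrapositive.

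Assume $A$ is not ergodic. My first step is to translate this into a statement about characters. Using the orthonormal basis $\{\chi_v\}_{v\in\ZZ^n}$ of $L^2(\TT^n)$, the identity $\chi_v\circ A=\chi_{A^{\top}v}$, and the injectivity of $A^{\top}$ on $\ZZ^n$ (which follows from $\det A\neq 0$), a standard Fourier-series argument shows that non-ergodicity of $A$ is equivalent to the existence of some $w\in\ZZ^n\setminus\{0\}$ together with a minimal $d\ge 1$ satisfying $(A^{\top})^d w=w$; consequently the characters $\chi_w,\chi_{A^{\top}w},\ldots,\chi_{(A^{\top})^{d-1}w}$ are pairwise distinct.

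The obstruction to periodic approximation will then be the trigonometric polynomial
$$
g=\sum_{j=0}^{d-1}\chi_{(A^{\top})^j w},
$$
which is continuous on $\TT^n$, $A$-invariant by construction, and satisfies $\int g\,d\,\mathrm{Leb}=0$ and $\int |g|^2\,d\,\mathrm{Leb}=d>0$ by orthonormality of distinct non-trivial characters. For any periodic orbit $O_n$ with associated periodic measure $\mu_n$, the $A$-invariance of $g$ forces $g$ (and hence $|g|^2$) to be constant on $O_n$, so for any choice $x_n\in O_n$,
$$
\int g\,d\mu_n=g(x_n),\qquad \int |g|^2\,d\mu_n=|g(x_n)|^2.
$$
If $\mu_n\xrightarrow{w^*}\mathrm{Leb}$, testing against $g$ forces $g(x_n)\to 0$, whereas testing against $|g|^2$ forces $|g(x_n)|^2\to d>0$, a contradiction. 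The only nontrivial ingredient is isolating the ``resonant'' character orbit $\{(A^{\top})^j w\}$ and packaging it into $g$; once $g$ is in hand, the corollary falls out formally from weak*-convergence applied to the two continuous test functions $g$ and $|g|^2$.
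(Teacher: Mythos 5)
Your proof is correct, but the converse direction takes a genuinely different and substantially more elementary route than the paper's. The paper proves the contrapositive by invoking the structure theorem for modules over $\QQ[x]$ (Theorem~\ref{STMPR}(1)), extracting via Lemma~\ref{similar} an integral block-triangular conjugate $\left(\begin{smallmatrix}B&C\\0&D\end{smallmatrix}\right)$ in which the companion block $D$ satisfies $D^m=I$ by Lemma~\ref{period}, projecting to the $D$-factor, and observing that the resulting averages of $m$ Dirac masses cannot converge to a non-atomic measure. You instead isolate a single nonzero $w\in\ZZ^n$ with minimal period $d$ under $A^\top$ (the Fourier-theoretic characterization of non-ergodicity, with injectivity of $A^\top$ ensuring both that the forward orbit closes into a genuine cycle and that $\chi_w,\chi_{A^\top w},\dots,\chi_{(A^\top)^{d-1}w}$ are pairwise distinct nontrivial characters), build the $A$-invariant trigonometric polynomial $g=\sum_{j=0}^{d-1}\chi_{(A^\top)^jw}$, and derive the contradiction by testing weak* convergence against $g$ and $|g|^2$. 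This avoids the module-theoretic machinery entirely and is self-contained at the level of classical Fourier analysis on $\TT^n$; it also makes the obstruction completely explicit as a single continuous invariant test function, at the cost of losing the structural block decomposition that the paper's route naturally produces (and which the paper reuses elsewhere). Both arguments are valid; yours is the shorter of the two, and is closer in spirit to the ``weak specification'' alternative the paper itself flags in the remark following the proof.
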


\begin{remark}
\begin{enumerate}
\item Started from \cite{A}, there are lots of research on periodic approximation, e.g., \cite{Sigmund,Katok, LST, DGS}, etc..
\item The quantity $d(O)$ also appears in the study of ergodic optimization (\cite{HLMXZ}).
\item According to the proof of Proposition \ref{transfer metric to measure}, any limit of $\mu_k$ is absolutely continuous w.r.t. Leb. Hence, small perturbations of ergodic endomorphisms on the torus may not admit a sequence of uniformly distributed periodic orbits.
\end{enumerate}
\end{remark}

Here, we provide an outline of the proof of Theorem \ref{metric}, especially the construction of the sequence of uniformly distributed periodic orbits, which is the core of this paper.
The endomorphism on an $n$-torus can be classified into two cases, one is irreducible, the other is reducible.
For the irreducible case,  periodic orbits will be chosen from the eigenvectors of $A$ modulo powers of some primes. These primes come from Frobenius density theorem in number theory.
By comparing the period of the eigenvector with the period of the linear recurrence sequence induced by $A$ with the initial condition $0,\cdots,0,1$, we can see that one of eigenvectors has a big period, and this one is the periodic point picked out. For the reducible case, the structure theorem for modules over polynomial rings is used to simplify the problem.

\begin{question}
Does any small perturbation of an expanding endomorphism $L$ on circle have uniformly distributed periodic orbits? If the answer is no, assume that $f$ is a small perturbation of $L$ which has uniformly distributed periodic orbits, whether $f$ is smoothly conjugate to $L$ or not?
We also curious about the phenomenon when the system is a small conservative perturbation of a hyperbolic endomorphism on the torus.
\end{question}

\noindent \textbf{Acknowledgement.} We wish to thank Yi Shi for his interesting question. We would like to thank the referees for many nice comments and suggestions. This work is partially supported by National Key R\&D Program of China 2022YFA1005801 and NSFC 12161141002.

\section{Preliminary}

In this section we list some basic results from number theory, including Lifting the Exponent Lemma (Theorem \ref{LEL}) and Frobenius Density Theorem (Theorem \ref{FDT}).

\begin{theorem}\cite{ADM}\label{LEL} (Lifting the Exponent Lemma)
Let $p$ be an odd prime, $x,y,k,m\in \ZZ, k>0$.
Denote $v_p(m)=\max\{i\in\ZZ_{\geq 0}:p^i\mid m\}$.
Suppose that $p\mid x-y$ but $p\nmid x, p\nmid y$. Then
$$v_p(x^k-y^k)=v_p(x-y)+v_p(k).$$
\end{theorem}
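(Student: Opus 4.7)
The plan is to prove the identity by reducing to two base cases and then combining them via induction on $v_p(k)$. Write $k = p^a m$ with $\gcd(p,m) = 1$, so $v_p(k) = a$. I would establish separately that (i) if $p \nmid k$ then $v_p(x^k - y^k) = v_p(x-y)$, and (ii) $v_p(x^p - y^p) = v_p(x-y) + 1$. Granting these, one iterates (ii) along the chain $(x,y),\,(x^p,y^p),\,\ldots,\,(x^{p^{a-1}},y^{p^{a-1}})$ (noting that at each stage $p \nmid x^{p^j},\, p \nmid y^{p^j}$, and $p \mid x^{p^j}-y^{p^j}$, so the hypotheses propagate) to obtain $v_p(x^{p^a}-y^{p^a}) = v_p(x-y) + a$. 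Applying (i) to $(x^{p^a},y^{p^a})$ with exponent $m$ then gives $v_p(x^k - y^k) = v_p(x-y) + a = v_p(x-y) + v_p(k)$, as desired.

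For step (i), factor $x^k - y^k = (x-y)\cdot S$ with $S = \sum_{i=0}^{k-1} x^{k-1-i} y^i$. Since $p \mid x-y$ we have $x \equiv y \pmod p$, hence $S \equiv k\,y^{k-1} \pmod p$. The assumptions $p \nmid k$ and $p \nmid y$ give $p \nmid S$, so $v_p(x^k - y^k) = v_p(x-y) + v_p(S) = v_p(x-y)$.

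For step (ii), set $d = x - y$, so $p \mid d$. Factor $x^p - y^p = d \cdot S_p$ with $S_p = \sum_{i=0}^{p-1}(y+d)^{p-1-i} y^i$. Since $p^2 \mid d^2$, the binomial theorem yields $(y+d)^{p-1-i} \equiv y^{p-1-i} + (p-1-i)\,y^{p-2-i} d \pmod{p^2}$ for $0 \le i \le p-2$ (and the term for $i = p-1$ contributes simply $y^{p-1}$). Summing gives
$$S_p \;\equiv\; p\,y^{p-1} + d\, y^{p-2}\sum_{i=0}^{p-2}(p-1-i) \;=\; p\,y^{p-1} + \frac{p(p-1)}{2}\,d\,y^{p-2} \pmod{p^2}.$$
Because $p$ is odd, $(p-1)/2 \in \ZZ$, and since $p \mid d$, the second summand is divisible by $p^2$. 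Hence $S_p \equiv p\,y^{p-1} \pmod{p^2}$, and combined with $p \nmid y$ this yields $v_p(S_p) = 1$, establishing (ii).

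The main subtlety, and the only place the oddness of $p$ is genuinely used, is the control of $S_p$ modulo $p^2$ in step (ii): one needs $(p-1)/2$ to be an integer so that the cross term contributes an extra factor of $p$. This is precisely why the lemma fails at $p = 2$ and why a modified version of LTE is required in that case. Everything else is a routine manipulation of the factorization $a^k - b^k = (a-b)\sum_{i=0}^{k-1} a^{k-1-i} b^i$ together with $v_p$ being additive under products.
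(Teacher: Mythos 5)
The paper does not prove this theorem; it is quoted from the reference \cite{ADM} without proof, so there is no internal argument to compare against. Your proof is correct and is essentially the standard textbook proof of the Lifting the Exponent Lemma: reduce to the two base cases (exponent coprime to $p$, and exponent exactly $p$), then iterate. The reduction is clean — hypotheses propagate along $(x^{p^j},y^{p^j})$ since $x-y \mid x^{p^j}-y^{p^j}$ and $p\nmid x^{p^j}y^{p^j}$ — and both base cases are verified correctly: in (i) the cofactor $S\equiv ky^{k-1}\pmod p$ is a $p$-unit, and in (ii) the binomial expansion modulo $p^2$ gives $S_p\equiv py^{p-1}+\tfrac{p(p-1)}{2}dy^{p-2}\pmod{p^2}$, where the cross term is divisible by $p^2$ precisely because $p$ is odd (so $(p-1)/2\in\ZZ$) and $p\mid d$; together with $p\nmid y$ this forces $v_p(S_p)=1$. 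You also correctly identify that the only genuine use of oddness is the integrality of $(p-1)/2$, which is exactly why $p=2$ needs a separate statement. No gaps.
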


\begin{corollary}\label{order}
Given $a\in \ZZ, |a|\neq 1$, and an odd prime $p$, $(a,p)=1$, let $d=\min\{n>0: p\mid a^n-1\}$ and $v_p(a^d-1)=t$.
Then for any $k\geq t, k\in \ZZ$, $p^k\mid a^n-1$ if and only if $p^{k-t}d\mid n$.
\end{corollary}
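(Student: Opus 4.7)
The plan is to apply Theorem \ref{LEL} after the substitution $b = a^d$. First I would verify that in both directions of the claimed equivalence one may write $n = dm$ for some $m \in \ZZ_{>0}$. By the definition of $d$ as the multiplicative order of $a$ modulo $p$, one has $p \mid a^n - 1$ if and only if $d \mid n$. Since $t \geq 1$ and $k \geq t$, the hypothesis $p^k \mid a^n-1$ forces $p \mid a^n - 1$ and hence $d \mid n$; and the hypothesis $p^{k-t}d \mid n$ obviously forces $d \mid n$. So in both directions I may set $n = dm$ and obtain $a^n - 1 = b^m - 1$ where $b := a^d$.

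Next I would check that $b$ and $1$ satisfy the hypotheses of LTE for the odd prime $p$. By the definition of $d$ we have $p \mid b-1$, and indeed $v_p(b-1) = t$; from $(a,p)=1$ we get $p\nmid b$, and trivially $p \nmid 1$. Therefore Theorem \ref{LEL} applied with $x = b$, $y = 1$, and exponent $m$ yields
$$v_p(a^n - 1) \;=\; v_p(b^m - 1) \;=\; v_p(b-1) + v_p(m) \;=\; t + v_p(m).$$

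Finally I would read off the corollary: $p^k \mid a^n - 1$ is equivalent to $v_p(a^n-1)\ge k$, i.e.\ to $v_p(m) \ge k-t$, i.e.\ to $p^{k-t} \mid m$, i.e.\ to $p^{k-t}d \mid n$. The proof is essentially a direct substitution into LTE, so there is no serious obstacle; the points that merit a sentence of verification are only that the hypothesis $|a|\neq 1$ guarantees both $d$ and $t$ are finite so that the statement is well-posed, and that the standing assumption $k \geq t$ is what makes the exponent $k-t$ appearing on the right-hand side a non-negative integer.
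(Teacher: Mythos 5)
Your proof is correct and is essentially the paper's own argument: both reduce to $n = dm$, set $b = a^d$, apply LTE with $x = b$, $y = 1$ to get $v_p(a^n-1) = t + v_p(m)$, and then read off the divisibility condition. The only difference is that you spell out more carefully that both directions of the equivalence justify writing $n = dm$, whereas the paper leaves this implicit.
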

\begin{proof}
Since $p\mid a^n-1$, $n=dl, l\in \ZZ$.
By using Theorem \ref{LEL} to $x = a^d$, $y=1$, we know that $v_p(a^n-1)=v_p(x^l-1)=v_p(x-1)+v_p(l)=t+v_p(l)$. Thus $v_p(a^n-1)\geq k$ if and only if $v_p(l)\geq k-t$, if and only if $p^{k-t}d\mid n$.
\end{proof}

\begin{theorem}\label{FDT}(\cite[Corollary 2]{density2}) ({\rm Frobenius Density Theorem}) Let $A\in M_n(\ZZ).$ Denote by $f(x)=\det(xI-A)=x^n-c_{n-1}x^{n-1}-\cdots -c_1x-c_0$. Assume that $f(x)$ is irreducible on $\QQ$. The set of primes $p$ for which $f$ modulo $p$ splits completely into linear factors has positive density, i.e.,
$\Lambda$ has positive density in primes, where
$$
\Lambda=\{p\in \NN : p \text{is a prime},\, \exists a_1,a_2,\dots,a_n\in\ZZ, s.t., f(x)\equiv\prod_{i=1}^n(x-a_i)\mod p\}.
$$
\end{theorem}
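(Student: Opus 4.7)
The plan is to deduce this from Chebotarev's density theorem applied to the splitting field of $f$ over $\QQ$. Since $f$ is monic and irreducible of degree $n$ in $\ZZ[x]$, let $K$ denote its splitting field over $\QQ$ and let $G=\mathrm{Gal}(K/\QQ)$; then $G$ is a finite group that embeds into the symmetric group $S_n$ via its action on the $n$ roots of $f$, so in particular $|G|\le n!$ and $|G|\ge n$ by transitivity.

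First I would recall the standard Galois-theoretic dictionary. For any rational prime $p$ not dividing $\mathrm{disc}(K/\QQ)$ (equivalently, $p$ unramified in $K$), and for any prime $\mathfrak{P}$ of $\mathcal{O}_K$ above $p$, the decomposition group at $\mathfrak{P}$ contains a unique element acting as $x\mapsto x^p$ on the residue field; letting $\mathfrak{P}$ vary over primes above $p$ produces a well-defined conjugacy class $\mathrm{Frob}_p\subset G$. By the Dedekind--Kummer theorem, for such unramified $p$ the factorization type of $f\bmod p$ coincides with the cycle type of any representative of $\mathrm{Frob}_p$ acting on the $n$ roots of $f$. In particular, $f$ splits completely into $n$ linear factors modulo $p$ if and only if $\mathrm{Frob}_p=\{e\}$.

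Next I would invoke Chebotarev's density theorem: for any conjugacy class $C\subset G$, the set of unramified primes $p$ with $\mathrm{Frob}_p\subset C$ has natural density $|C|/|G|$. Applying this with $C=\{e\}$, the set $\Lambda$ of primes for which $f$ splits completely modulo $p$ agrees, up to the finite exceptional set of ramified primes, with the set of primes whose Frobenius class is trivial, so $\Lambda$ has density $1/|G|>0$ as required.

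The only nontrivial obstacle is Chebotarev's theorem itself, a deep analytic-number-theoretic input that relies on Artin $L$-functions and their nonvanishing on $\mathrm{Re}(s)=1$. Since the statement only requires positivity of the density (not the exact value), the weaker original theorem of Frobenius --- provable using just the Dedekind zeta function $\zeta_K(s)$ together with the decomposition of its Euler factors over the rational primes --- is already enough. For this reason the paper simply quotes the result from the cited source rather than reproving it.
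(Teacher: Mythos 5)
Your derivation is sound, but it is worth noting at the outset that the paper does not prove this theorem at all: it is stated as a citation to Rosen's article and used as a black box, so there is no "paper's proof" to compare against. What you have written is the standard route from Chebotarev (or from Frobenius's weaker 1896 theorem, as you correctly observe at the end) to the stated density result, and it is essentially correct: take the splitting field $K$ of $f$, identify $G=\mathrm{Gal}(K/\QQ)\hookrightarrow S_n$, observe that $f$ splits into $n$ distinct linear factors modulo $p$ precisely when the Frobenius class at $p$ is trivial, and apply the density theorem to the class $\{e\}$ to get density $1/|G|>0$. Since you only need positivity, you are also right that the exact density is not needed and Frobenius's theorem (which distinguishes only cycle types, not full conjugacy classes — irrelevant here since $\{e\}$ is both) already suffices.

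One small technical imprecision worth flagging: you invoke the Dedekind--Kummer correspondence for all primes $p$ unramified in $K$, but that correspondence (the factorization type of $f\bmod p$ matching the cycle type of $\mathrm{Frob}_p$) requires $p\nmid\mathrm{disc}(f)$, or more precisely $p\nmid[\mathcal{O}_{\QQ(\alpha)}:\ZZ[\alpha]]$ where $\alpha$ is a root of $f$. A prime can be unramified in $K$ yet still divide $\mathrm{disc}(f)$, in which case $f\bmod p$ may have repeated roots and the cycle-type dictionary breaks down. Since $\mathrm{disc}(f)\neq 0$ (as $f$ is irreducible over a field of characteristic zero and hence separable), this excludes only finitely many primes and does not affect the density conclusion, but the hypothesis should be stated as $p\nmid\mathrm{disc}(f)$ rather than merely $p$ unramified. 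With that adjustment, your argument is a complete and correct proof of the theorem the paper cites without proof.
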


\begin{corollary}\label{|xI-A|}
Under the assumptions of Theorem \ref{FDT}, there are infinitely many primes $p$ such that $f(x)\equiv 0\mod p$ has $n$ nonzero distinct integer solutions $\{a_i\}_{i=1}^n$, i.e.,
$$f(a_i)\equiv 0\mod p,\quad a_i\not\equiv a_j\mod p,\  i\neq j,\quad a_i\not\equiv 0\mod p.$$
Furthermore, for such $p$, any $k\in\NN$, $f(x)\equiv 0\mod p^k$ has $n$ nonzero distinct integer solutions $\{b_i\}_{i=1}^n$, i.e.,
$$f(b_i)\equiv 0\mod p^k,\quad b_i\not\equiv b_j\mod p,\ i\neq j,\quad b_i\not\equiv 0\mod p.$$
\end{corollary}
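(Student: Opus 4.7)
The plan is to combine Frobenius Density (Theorem \ref{FDT}) with Hensel's lemma, after ruling out a finite set of bad primes. The first statement is almost immediate from Theorem \ref{FDT}; the subtlety is only that ``splits into linear factors'' does not a priori mean ``splits into distinct nonzero linear factors.'' The second statement is a standard lifting argument.

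First, I would apply Theorem \ref{FDT} to obtain the infinite set $\Lambda$ of primes modulo which $f$ factors as $\prod_{i=1}^n (x - a_i) \bmod p$ for some integers $a_i$. I then need to discard from $\Lambda$ the finitely many primes that obstruct the distinctness and nonvanishing conditions. For distinctness, note that since $f$ is irreducible over $\QQ$ it is separable, so its discriminant $\mathrm{disc}(f)$ is a nonzero integer; hence only finitely many primes $p$ divide $\mathrm{disc}(f)$, and for all other $p\in\Lambda$ the reduction $f\bmod p$ is separable, forcing the $a_i$ to be pairwise distinct modulo $p$. For nonvanishing, observe that $f(0) = \det(-A) = (-1)^n\det(A)\neq 0$ (using irreducibility of $f$ and $n\ge 1$, together with $\det A\neq 0$), so only finitely many primes divide $f(0)$; for all other $p$, no root $a_i$ can be $\equiv 0\bmod p$. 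Removing these two finite sets from $\Lambda$ still leaves an infinite set, proving the first part.

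For the second part, I fix such a prime $p$ and invoke Hensel's lemma. Since the $a_i$ are simple roots of $f\bmod p$, we have $f'(a_i)\not\equiv 0\bmod p$ for each $i$. Hensel's lemma then lifts each $a_i$ uniquely to a root $b_i\in\ZZ/p^k\ZZ$ of $f$ mod $p^k$ with $b_i\equiv a_i\bmod p$. The conditions $b_i\not\equiv b_j\bmod p$ (for $i\ne j$) and $b_i\not\equiv 0\bmod p$ transfer automatically from the corresponding conditions on the $a_i$, yielding the desired $n$ nonzero distinct integer solutions modulo $p^k$.

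I do not expect a serious obstacle here; the only point requiring care is the transition from ``$f\bmod p$ has $n$ linear factors'' to ``$f\bmod p$ has $n$ \emph{distinct} roots,'' which is handled cleanly by excluding primes dividing the discriminant. Everything else is routine: $f(0)\ne 0$ rules out $0$ as a root for almost all $p$, and Hensel lifting handles the passage from $p$ to $p^k$ because separability modulo $p$ is preserved.
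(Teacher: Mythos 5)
Your proof is correct and follows essentially the same route as the paper: apply Theorem \ref{FDT}, then discard the finitely many primes dividing the discriminant (to get distinctness) and dividing $f(0)=\pm c_0$ (to get nonvanishing), and finally lift by Hensel. The only differences are cosmetic: the paper carries out the discriminant comparison explicitly via the universal integral polynomial from Lang (to justify $D(f)\equiv D(\prod(x-a_i))\bmod p$), and it proves the Hensel step inline by the Newton iteration $x_{k+1}=x_k+p^l s$ rather than citing Hensel's lemma as a black box. Your observation that separability of $f\bmod p$ (for $p\nmid\mathrm{disc}(f)$) forces the linear factors to be distinct, and that $f(0)\neq 0$ handles nonvanishing, captures precisely the paper's argument.
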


\begin{proof}
According to \cite[P191, Theorem 6.1]{Lang}, there is an integral polynomial $h(x_0,x_1,\dots,x_{n-1})$ such that the discriminant $D(g)$ of any $g\in\ZZ[x]$ satisfies $D(g)=h(b_0,b_1,\dots, b_{n-1})$, where $g(x)=x^n-b_{n-1}x^{n-1}-\cdots -b_1x-b_0$, $D(g)=\prod_{1\leq i<j\leq n}(y_i-y_j)^2$,
$g(y_i)=0, y_i\in\CC, 1\leq i\leq n.$
From Theorem \ref{FDT}, we know that there are infinitely many primes $p$ larger than both $|D(f)|$ and $|c_0|$,  s.t., $f(x)\equiv r(x)\mod p$, where $r(x)=\prod_{i=1}^n(x-a_i)=x^n-e_{n-1}x^{n-1}-\cdots -e_1x-e_0$, $e_i,a_i\in\ZZ$. Then $$D(f) = h(c_0,c_1,\dots, c_{n-1})\equiv h(e_0,e_1,\dots, e_{n-1})=D(r)=\prod_{1\leq i<j\leq n}(a_i-a_j)^2\mod p.$$
Thus
$(p,D(f))=1$ implies $a_i\not\equiv a_j \mod p$, for any $i\neq j$. In other words, $f'(a_i)\not\equiv 0 \mod p$, $1\leq i\leq n$. Since $(p,c_0)=1$, $a_i\not\equiv 0\mod p$, $1\leq i\leq n$. This finishes the proof of the first part of the corollary.\\
\indent Suppose that there is a solution $x\equiv x_k\mod p^k$ satisfying that $x_k\not\equiv 0\mod p$, $f(x)\equiv 0\mod p^k$ and $f'(x)\not\equiv 0\mod p$. Thus one can write $f(x_k)$ as $p^lr$, $p\nmid r$ and $l\geq k$. Suppose $t\equiv s\mod p$ is the unique solution of $$f'(x_k)t+r\equiv 0\mod p.$$
Then, let $x_{k+1}\equiv x_k+p^ls\mod p^{k+1}$. Now we have $x_{k+1}\equiv x_k\not\equiv 0\mod p$. Since
$$f(x_{k+1})\equiv f(x_k)+f'(x_k)sp^l\equiv p^l(r+f'(x_k)s)\equiv 0 \mod p^{k+1},$$
$x\equiv x_{k+1}$ is a solution of $f(x)\equiv 0\mod p^{k+1}$ and $f'(x)\not\equiv 0\mod p$.
The conclusion follows by induction.
\end{proof}

\begin{remark}\label{eigenvalue}
Let $A\in M_n(\ZZ)$ and $k\in\NN$. If $f(x)=\det(xI-A)=x^n-c_{n-1}x^{n-1}-\cdots -c_1x-c_0\equiv 0\mod p^{k}$ has a solution $x\equiv a\mod p^k$, then for $v=(1,a,a^2,\cdots, a^{n-1})^T\in\ZZ^n$ we have $Bv\equiv av\mod p^k$, where $B$ is the {\em companion matrix} of $f(x)$, i.e.,
$$B=\left(\begin{matrix}0&1&&&\\&0&1&&\\\cdots&\cdots&\cdots&\cdots&\cdots\\&&&0&1\\c_0&c_1&\cdots&\cdots &c_{n-1}\end{matrix}\right).$$
Here ``$T$'' means transpose.

\end{remark}

\begin{lemma}\label{invertible}
Given $e\in\ZZ^n, e\neq 0$, $A\in M_n(\ZZ)$, denote
$$P=P(e, A)=\left(\begin{matrix}e\\eA\\\cdots\\eA^{n-1}\end{matrix}\right).\eqno{(P)}$$
If $f(x)=\det(xI-A)$ is irreducible on $\QQ$, then $P$ is invertible.
\end{lemma}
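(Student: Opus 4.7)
The plan is to argue by contradiction and use Cayley--Hamilton together with irreducibility of $f$. Suppose that $P$ is not invertible; then its rows $e, eA, \ldots, eA^{n-1}$ are linearly dependent over $\QQ$, so there is a nonzero polynomial $g(x) \in \QQ[x]$ of degree at most $n-1$ with $e\, g(A) = 0$.

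Next I would consider the set
$$I = \{ h(x) \in \QQ[x] : e\, h(A) = 0 \},$$
which is easily seen to be an ideal of $\QQ[x]$. Since $\QQ[x]$ is a PID, we have $I = (m(x))$ for some monic $m(x)$. The Cayley--Hamilton theorem gives $f(A) = 0$, hence $e\, f(A) = 0$, so $f \in I$ and therefore $m(x) \mid f(x)$ in $\QQ[x]$. Because $f$ is irreducible over $\QQ$, the only possibilities are $m(x) = 1$ or $m(x) = f(x)$ (up to a unit).

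If $m(x) = 1$, then $1 \in I$, which forces $e = e \cdot I_n = 0$, contradicting the hypothesis $e \neq 0$. Hence $m(x) = f(x)$, which has degree $n$. But then the polynomial $g(x)$ found above, of degree strictly less than $n$, must lie in $I = (f)$ and therefore be zero, contradicting our choice of $g$. This contradiction shows that $P$ must be invertible.

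The argument is essentially formal once one sets up the annihilator ideal, so I do not expect a real obstacle; the only subtlety is remembering that irreducibility of $f$ over $\QQ$ (rather than, say, the minimal polynomial of $A$) is what forces the dichotomy $m = 1$ or $m = f$, and this is exactly the hypothesis we are given.
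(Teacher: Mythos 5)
Your argument is correct and is essentially the same as the paper's: you both pass to the annihilator ideal $\{h\in\QQ[x]:e\,h(A)=0\}$, observe it is principal, use Cayley--Hamilton to show $f$ lies in it, and invoke irreducibility of $f$ to force the generator (up to a unit) to be $f$ itself, so no nonzero polynomial of degree $<n$ can annihilate $e$. The only cosmetic differences are that you phrase it as a contradiction and explicitly dispose of the case that the generator is a unit, whereas the paper gives the direct version and proves principality by long division rather than citing that $\QQ[x]$ is a PID.
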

Here there is an abuse of notation: ``$v\in\ZZ^n$'' can be either a row vector or a column vector according to the context.
\begin{proof}
Denote $$S=\{d(x)\in \QQ[x]:ed(A)=\vec{0}\}.$$
Suppose that $g(x)\in S$ satisfies that $\deg g(x)=\min_{d\in S}\deg d(x)$, where $\deg$ represents the polynomial degree. We claim that $S=(g(x))=\{g(x)a(x):a(x)\in \QQ[x]\}$. For any $h(x)\in S$, there are $s(x), r(x)\in \QQ[x]$ such that $h(x)=s(x)g(x)+r(x)$ and $\deg r(x)<\deg g(x)$. Since $$
er(A)=eh(A)-eg(A)s(A)=\vec{0}\quad \text{and}\quad \deg r(x)<\deg g(x),$$
by the definition of $g(x)$, we know that $r(x)=0$, i.e., $h(x)\in (g(x))$.\\
\indent Since $f(A)=0$, $f(x)\in S=(g(x))$. While $f(x)$ is irreducible, it implies that $f(x)=cg(x)$, $c\in\QQ$. So $e,eA,\cdots, eA^{n-1}$ must be linearly independent over $\QQ$, otherwise there will be a polynomial with degree strictly smaller than $n$ in $S$. Thus $\det(P)\neq 0$, i.e., $P$ is invertible.
\end{proof}

\begin{definition}
Given $f_i\in\ZZ, i=1,2,\dots,k$,
a sequence $\{u_m\}_{m\geq 0}$ of integers defined by
\begin{equation}\label{LRS}
u_{m+k}=\sum_{j=1}^kf_ju_{m+k-j}
\end{equation}
is called a linear recurrent sequence with the associated polynomial
$$F(x)= x^k-f_1x^{k-1}-\cdots -f_{k-1}x-f_k.$$
\end{definition}

Recall  the notation introduced by Dedekind. Two integer polynomials $a(x)$ and $b(x)$ satisfy $a(x)\equiv b(x) \ (\modd\ m, c(x))$, if and only if $a(x)-b(x)$ is divisible by $c(x)$ modulo $m$, i.e., $a(x)-b(x)=c(x)q(x)+mr(x)$ with integer polynomials $q(x)$ and $r(x)$.

\begin{theorem}\label{FPS}
\cite[page 606]{recurrence} (Fundamental theorem on purely periodic sequences)\\
\indent Let $q$ be an integer larger than $1$. If $\{u_m\}_{m\geq 0}$ is a linear recurrent sequence defined by (\ref{LRS}), with the associated polynomial $F(x)$, then a necessary and sufficient condition that
$\{u_m \modd\ q\}_{m\geq 0}$ should be purely periodic and admit the period $T$ is that
$$(x^T-1)U(x)\equiv 0\quad (\modd\ q, F(x))$$
where
$$U(x)=u_0x^{k-1}+(u_1-f_1u_0)x^{k-2}+\cdots+(u_{k-1}-f_1u_{k-2}-\cdots-f_{k-1})$$
is a polynomial of degree $k-1$ in $x$ whose coefficients are determined entirely by the $k$ initial values of $\{u_m\}_{m\geq 0}$ and the coefficients $f_j$ of (\ref{LRS}).
\end{theorem}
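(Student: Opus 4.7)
The plan is to pass to generating functions and to exploit the fact that the polynomial $U$ in the statement is the formal reciprocal of a polynomial that arises naturally from multiplying the generating function of $\{u_m\}$ by the reciprocal polynomial of $F$. Set $g(x) = \sum_{m \ge 0} u_m x^m \in \ZZ[[x]]$, and let $\tilde F(x) = 1 - f_1 x - f_2 x^2 - \cdots - f_k x^k = x^k F(1/x)$. The first step I would carry out is the routine check that in $\ZZ[[x]]$ one has $\tilde F(x)\, g(x) = V(x)$, where
\[
V(x) = u_0 + (u_1 - f_1 u_0)\, x + \cdots + (u_{k-1} - f_1 u_{k-2} - \cdots - f_{k-1} u_0)\, x^{k-1}
\]
is exactly the formal reciprocal $V(x) = x^{k-1} U(1/x)$ of $U(x)$. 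Indeed, for $n \ge k$ the coefficient of $x^n$ in $\tilde F(x)\, g(x)$ is $u_n - f_1 u_{n-1} - \cdots - f_k u_{n-k}$, which vanishes by the recurrence (\ref{LRS}), and for $n < k$ it coincides coefficient-by-coefficient with $V(x)$ by direct inspection.

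Next I would translate pure periodicity into the language of generating functions. The condition that $\{u_m \bmod q\}_{m \ge 0}$ is purely periodic with period $T$ is exactly that $u_{m+T} \equiv u_m \pmod q$ for every $m \ge 0$, which, read coefficient by coefficient, says that $(1 - x^T)\, g(x) \equiv W(x) \pmod q$ for some $W(x) \in \ZZ[x]$ of degree at most $T-1$ (explicitly, $W(x) = \sum_{m=0}^{T-1} u_m x^m$). Multiplying the identity of the first step by $1 - x^T$ and substituting, pure periodicity with period $T$ becomes equivalent to
\[
(1 - x^T)\, V(x) \equiv W(x)\, \tilde F(x) \pmod q, \qquad \deg W \le T-1.
\]

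Finally I would convert this into the form stated in the theorem by applying the reciprocal substitution $x \mapsto 1/x$ and clearing denominators with $x^{T+k-1}$. Using $x^{k-1} V(1/x) = U(x)$ and $x^k \tilde F(1/x) = F(x)$, one obtains
\[
(x^T - 1)\, U(x) \equiv x^{T-1} W(1/x)\, F(x) \pmod q,
\]
and $x^{T-1} W(1/x)$ is itself a polynomial in $x$, so this is precisely the congruence $(x^T - 1)\, U(x) \equiv 0 \ (\modd\ q, F(x))$. The converse implication follows by running the same reciprocal substitution in reverse to recover the generating-function identity, then expanding coefficient by coefficient to read off the periodicity. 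The main obstacle is not any single computation but the reciprocal bookkeeping: one must be careful that the pairs $V \leftrightarrow U$ and $\tilde F \leftrightarrow F$ match under $x \mapsto 1/x$ exactly as the exponents prescribe, so that after clearing denominators everything genuinely remains in $(\ZZ/q\ZZ)[x]$.
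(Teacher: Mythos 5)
The paper does not actually prove this statement; Theorem \ref{FPS} is quoted from Ward's 1933 paper and used as a black box, so there is no in-paper proof to compare against. Judging your argument on its own, it is correct, and it is in substance the classical generating-function proof of Ward's result: the identity $\tilde F(x)g(x) = V(x)$, the translation of pure $T$-periodicity of $\{u_m \bmod q\}$ into $(1-x^T)g(x)\equiv W(x)\pmod q$ with $\deg W\le T-1$, and the passage between the $(V,\tilde F)$ and $(U,F)$ pictures via the reciprocal substitution $x\mapsto 1/x$ followed by multiplication by $x^{T+k-1}$ are all exactly the right moves, and the power-series manipulation is legitimate because $\tilde F$ has constant term $1$ and is therefore a unit in $(\ZZ/q\ZZ)[[x]]$.

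The one point you flag as ``reciprocal bookkeeping'' but do not actually settle is the degree control needed in the converse direction. The congruence $(x^T-1)U(x)\equiv 0\ (\modd\ q,\ F(x))$ a priori only produces integer polynomials $q_1,r$ with $(x^T-1)U = q_1F + qr$, and for $x^{T-1}q_1(1/x)$ and $x^{T+k-1}r(1/x)$ to be genuine polynomials you need $\deg q_1\le T-1$ and $\deg r\le T+k-1$. This is not part of the definition of the Dedekind congruence, but it is automatic once you normalize: since $F$ is monic of degree $k$, Euclidean division in $\ZZ[x]$ lets you assume $\deg r<k$, and then comparing degrees in $(x^T-1)U - qr = q_1F$ (with $\deg\bigl((x^T-1)U\bigr)\le T+k-1$) forces $\deg q_1\le T-1$. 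Equivalently, $(x^T-1)U\equiv 0\ (\modd\ q,F)$ holds iff the remainder of $(x^T-1)U$ upon division by $F$ has all coefficients divisible by $q$, and then the quotient automatically has degree at most $T-1$. With that spelled out, the converse runs exactly as you describe and the proof is complete.
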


\begin{definition}\label{period of LRS}
Given $A\in M_n(\ZZ)$, denote $f(x)=\det(xI-A)= x^n-c_{n-1}x^{n-1}-\cdots -c_1x-c_0$. The linear recurrence sequence $\{u_k\}$ defined by
\begin{equation}\label{LRS of A}
u_{n+k}=c_{n-1}u_{n+k-1}+\cdots +c_1u_{k+1}+c_0u_k,\qquad u_0=u_1=\cdots =u_{n-2}=0, u_{n-1}=1
\end{equation}
is called the {\em linear recurrence sequence induced by $A$}. For any prime number $p$ such that $(p,\det(A))=1$, denote by $T_r$ the period of sequence $\{u_k\mod p^r\}$.
\end{definition}

Similar to \cite[Corollary 3]{Bright}, the following result gives the period of a linear recurrence sequence module a power of prime which is proved here by another method.

\begin{lemma}
Given $A\in M_n(\ZZ)$, $T_r$ is defined in Definition \ref{period of LRS}, and $p$ is an odd prime. If the induced endomorphism of $A$ on the $n$-torus is ergodic, then there is $t\in\NN$ such that $T_1=\cdots=T_t$ and $T_k=p^{k-t}T_1$ for any $k>t$.
\end{lemma}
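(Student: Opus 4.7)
The plan is to translate $T_r$ into a multiplicative order in a $p$-adic ring and then to iterate using a binomial-expansion argument in the spirit of Lifting the Exponent. First, for the sequence in Definition \ref{period of LRS}, the polynomial $U(x)$ of Theorem \ref{FPS} collapses to the constant $1$: with $u_0=\cdots=u_{n-2}=0$ and $u_{n-1}=1$, every coefficient of $U(x)$ vanishes except the constant one, which equals $u_{n-1}=1$. Theorem \ref{FPS} therefore identifies $T_r$ as the smallest positive integer $T$ with $x^T\equiv 1\ (\modd\ p^r,f(x))$. Denoting by $\alpha$ the image of $x$ in $R_r=\ZZ/p^r\ZZ[x]/(f(x))$, the hypothesis $(p,\det A)=1$ makes $\alpha$ a unit (since $\det A$ is, up to sign, the constant term of $f$), so $T_r$ is the order of $\alpha$ in $R_r^\times$. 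To compare these orders for varying $r$ uniformly, I would pass to the $p$-adic ring $R=\ZZ_p[x]/(f(x))$, a free $\ZZ_p$-module of rank $n$ carrying the coordinatewise $p$-adic valuation $v_p$; then $T_r$ is the smallest positive $T$ with $v_p(\alpha^T-1)\ge r$.

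The next step is to rule out $\alpha^{T_1}=1$ in $R$. Such an equality would force $f(x)\mid x^{T_1}-1$ in $\ZZ_p[x]$, and monicness of $f$ together with uniqueness of polynomial division promote this divisibility to $\QQ[x]$, making \emph{every} root of $f$ (hence every eigenvalue of $A$) a root of unity, contradicting ergodicity. Therefore $t:=v_p(\alpha^{T_1}-1)$ is a finite positive integer. Writing $\alpha^{T_1}=1+p^t\gamma$ with $\gamma\in R$ of $p$-adic valuation $0$, one has $\alpha^{T_1}\equiv 1\pmod{p^r R}$ for every $r\le t$, so $T_r\mid T_1$; combined with $T_1\mid T_r$ (any exponent killing $\alpha$ modulo $p^r$ kills it modulo $p$), this yields $T_1=T_2=\cdots=T_t$.

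The inductive step for $r\ge t$ then asserts $\alpha^{T_r}=1+p^r\delta_r$ with $v_p(\delta_r)=0$, and consequently $T_{r+1}=pT_r$. The base case $r=t$ is exactly what was just established. Assuming the statement at level $r$, the binomial theorem gives
\[
\alpha^{pT_r}=(1+p^r\delta_r)^p=1+p^{r+1}\delta_r+\sum_{j=2}^{p}\binom{p}{j}p^{jr}\delta_r^{j}.
\]
Because $p$ is odd and $r\ge 1$, every remaining summand has $p$-adic valuation at least $r+2$: for $2\le j\le p-1$ one has $v_p\bigl(\binom{p}{j}\bigr)\ge 1$, so $v_p\bigl(\binom{p}{j}p^{jr}\delta_r^j\bigr)\ge 1+jr\ge 1+2r\ge r+2$; and for $j=p$, $v_p(p^{pr}\delta_r^p)=pr\ge 3r\ge r+2$. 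Hence $v_p(\alpha^{pT_r}-1)=r+1$ exactly, so $\alpha^{pT_r}\equiv 1\pmod{p^{r+1}R}$ while $\alpha^{T_r}\not\equiv 1\pmod{p^{r+1}R}$. Since $T_r\mid T_{r+1}\mid pT_r$ and $T_{r+1}\neq T_r$, this forces $T_{r+1}=pT_r$, and the same display exhibits $\alpha^{T_{r+1}}$ in the required form to continue the induction. Iterating delivers $T_k=p^{k-t}T_1$ for every $k>t$.

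The main obstacle I anticipate is the non-triviality step $\alpha^{T_1}\neq 1$, where ergodicity must be invoked. Reducibility of $f$ would reduce "no eigenvalue is a root of unity" to a claim about individual factors, so it is important that the promoted divisibility $f\mid x^{T_1}-1$ in $\QQ[x]$ is read as forcing \emph{every} root of $f$ to be a root of unity; once that reading is adopted, the descent through monic polynomial division works uniformly and the binomial-expansion induction runs cleanly because $p$ is odd (the hypothesis $p$ odd is exactly what ensures $v_p\bigl(\binom{p}{2}\bigr)=1$, and hence that only the linear term in the expansion contributes to the $(r+1)$-th level).
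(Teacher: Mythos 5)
Your proposal is correct and follows essentially the same route as the paper: both identify $T_r$ via Theorem \ref{FPS} with $U(x)=1$, invoke ergodicity (no eigenvalue of $A$ is a root of unity) to get a finite $t=v_p(\alpha^{T_1}-1)$, and then run the binomial expansion $(1+p^m s)^p\equiv 1+p^{m+1}s$ mod $p^{m+2}$ inductively to conclude $T_{t+i}=p^i T_1$. The only difference is cosmetic: you carry out the congruences in $\ZZ_p[x]/(f(x))$ with an explicit coordinatewise valuation, while the paper works in $\ZZ[x]/(f(x))$ with Dedekind congruences, but the underlying calculation is the same.
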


\begin{proof}
\indent $\{u_k \mod p^r\}$ is periodic since $(p,\det(A))=1$. According to Theorem \ref{FPS}, and noting $U(x)\equiv 1$ now, the period $T_r$ of $\{u_k \mod p^r\}$ is the minimal positive integer $k$ such that
$$x^k\equiv 1 \quad (\modd\ p^r,f(x)),$$
i.e., $$x^k-1=f(x)q(x)+p^rt(x),\quad q(x),t(x)\in \ZZ[x].$$
Since $A$ is an ergodic endomorphism on an $n$-torus, any root of unity is not a root of $f(x)$. Therefore there is positive integer $t<+\infty$ and $p\nmid r(x)$ with $\deg(r(x))<\deg(f(x))$ such that $x^{T_1}\equiv 1+p^tr(x)\mod f(x)$. Then $T_1=\cdots=T_t$. Notice the fact if $x^k\equiv 1+p^ms(x)\mod f(x)$ with $p\nmid s(x)$ and $m\geq 1$ then
$$x^{pk}\equiv (1+p^ms(x))^p\equiv 1+p^{m+1}s(x)\quad (\modd\ p^{m+2},f(x)),$$
i.e.
$$x^{pk}\equiv 1+p^{m+1}l(x)\mod f(x),\quad \text{with}\ p\nmid l(x).$$
By induction, for any $i\in\NN$,
$$x^{p^iT_1}\equiv 1+p^{t+i}r_i(x)\mod f(x), \quad \text{with}\ p\nmid r_i(x).$$
It implies that $T_{t+i}\mid p^iT_1$ and $T_{t+i}\nmid p^{i-1}T_1$. Combining with $T_1\mid T_{t+i}$, we get that  $T_{t+i}=p^{i}T_1$ for any $i\in\NN$.
\end{proof}

\section{Proof of main results}

\subsection{Proof of Theorem 1.1}
\begin{lemma}\label{similarity-matrix}
Assume that $A\in M_n(\mathbb{Z})$ induces an ergodic endomorphism on an $n$-torus. Moreover, assume that there exist $P, B\in M_n(\ZZ)$, $\det(P)\neq 0$ such that $PA =BP$. If $B$ admits a sequence of uniformly distributed periodic orbits, then $A$ admits a sequence of uniformly distributed periodic orbits.
\end{lemma}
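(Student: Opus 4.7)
The plan is to exploit $P$ as a finite-to-one semiconjugacy on the torus and lift the periodic orbits of $B$ back to periodic orbits of $A$. Since $P\in M_n(\ZZ)$ sends $\ZZ^n$ into itself, it descends to a continuous surjective group homomorphism $P:\TT^n\to\TT^n$ whose kernel $K=\ker P$ is a finite subgroup of order $|\det P|$, and the identity $PA=BP$ says that $P$ intertwines the endomorphisms $A$ and $B$ on $\TT^n$. Write $L:=\|P\|$ for the operator norm of $P$ as a linear map on $\RR^n$ (so $P$ is $L$-Lipschitz for the quotient metric on $\TT^n$), and set
$$
\eta:=\min_{v\in K\setminus\{0\}}d(v,0)>0.
$$

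Given the uniformly distributed sequence $\{O_k'\}$ of periodic $B$-orbits with $d(O_k')^nT(O_k')\ge C'$ and $T(O_k')\to\infty$, define $S_k:=P^{-1}(O_k')\subset\TT^n$. By the intertwining relation $S_k$ is $A$-invariant, and it is finite (of size $|\det P|\cdot T(O_k')$), so the forward $A$-orbit of any point of $S_k$ eventually lands in a periodic $A$-orbit $O_k\subset S_k$; this $O_k$ is our candidate. For any $y\in O_k$, $B^{T(O_k)}Py=PA^{T(O_k)}y=Py\in O_k'$, hence $T(O_k')\mid T(O_k)$; in particular $T(O_k)\ge T(O_k')$ and $T(O_k)\to\infty$. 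For two distinct points $y\ne y'$ of $O_k$, there are two cases. If $Py=Py'$, then $y-y'$ is a nonzero element of $K$, so $d(y,y')\ge \eta$. If $Py\ne Py'$, the Lipschitz bound for $P$ yields
$$
d(y,y')\ge L^{-1}d(Py,Py')\ge L^{-1}d(O_k').
$$
Hence $d(O_k)\ge\min(\eta,\,L^{-1}d(O_k'))$. By the Remark following Theorem~\ref{metric}, $d(O_k')^n\le C'/T(O_k')\to 0$, so for all sufficiently large $k$ the second term in the minimum is the smaller one. Consequently
$$
d(O_k)^nT(O_k)\ge L^{-n}\,d(O_k')^nT(O_k')\ge L^{-n}C',
$$
which is $(*)$ with $C:=L^{-n}C'$.

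The only step requiring care is the Lipschitz estimate for $P$, where I have to translate cleanly between the quotient metric on $\TT^n$ and Euclidean distances between lifts: choose representatives $\tilde y,\tilde y'\in\RR^n$ of $y,y'$ with $\|\tilde y-\tilde y'\|=d(y,y')$ and observe that $P\tilde y$, $P\tilde y'$ are lifts of $Py,Py'$, so $d(Py,Py')\le\|P\tilde y-P\tilde y'\|\le L\,d(y,y')$. Everything else reduces to the elementary pigeonhole fact that a self-map of a finite set has a periodic point, together with the equivariance of $P$.
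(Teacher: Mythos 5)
Your proof is correct and follows essentially the same route as the paper: pull back a periodic $B$-orbit through $P$, locate a periodic $A$-orbit inside the finite invariant set $P^{-1}(O_k')$ by pigeonhole, note $T(O_k')\mid T(O_k)$, and control $d(O_k)$ via the Lipschitz constant $\|P\|$. The only refinement is your explicit treatment of the kernel case $Py=Py'$ (the $\eta$-bound, then discarded for large $k$ since $d(O_k')\to 0$), which the paper compresses into the single remark that $P$ is a local diffeomorphism and so $d(O')\ge\|P\|^{-1}d(O)$.
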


\begin{proof}
For any periodic orbit $O$ of $B$, one has $A(P^{-1}O)\subset P^{-1}(O)$. Hence there exists $O'\subset P^{-1}(O)$, which is a periodic orbit of $A$, and $T(O')=kT(O)$, $1\le k\le |\det(P)|$, $k\in \NN$.
Since $P$ is a local diffeomorphism, $d(O')\geq ||P||^{-1}d(O)$.\\
\indent Choose a sequence of periodic orbits $\{O_k\}_{k\in\NN}$ such that $d(O_k)^nT(O_k)\geq C$ with constant $C$ and $\lim_{k\to +\infty}T(O_k)=+\infty$. Then $d(O'_k)^nT(O'_k)\geq ||P||^{-n}C$ and
$\lim_{k\to +\infty}T(O'_k)=+\infty$.
\end{proof}

Firstly, we deal with the irreducible case of Theorem 1.1. Recall that a
monic irreducible polynomial $f(x)$ on $\QQ$ means that
$f(x)=x^k+\sum_{i=0}^{k-1}a_ix^i, a_i\in\QQ$, and any $g(x)\mid f(x)$ implies that $g(x)$ is $c$ or $cf(x)$, where $c$ is a rational constant.
\begin{proposition}\label{n-irreducible}
Assume $A$ is an ergodic endomorphism on an $n$-torus.
If $f(x)=\det(xI-A)=x^n-c_{n-1}x^{n-1}-\cdots -c_1x-c_0$ is irreducible on $\QQ$, then $A$ admits a sequence of uniformly distributed periodic orbits.
\end{proposition}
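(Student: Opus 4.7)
My plan is to combine Frobenius density with a resultant-based lattice estimate. The first step is a reduction to the companion matrix: let $B$ denote the companion matrix of $f$, take $e = (1, 0, \ldots, 0)$, and note that $P := P(e, A)$ is invertible by Lemma \ref{invertible}, while a direct Cayley--Hamilton check yields $PA = BP$. By Lemma \ref{similarity-matrix}, it therefore suffices to produce a uniformly distributed sequence of periodic orbits for $B$, and from now on I assume $A$ is the companion matrix.

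The second step is the arithmetic construction. I fix once and for all an odd prime $p$ supplied by Corollary \ref{|xI-A|}: for each $k \in \NN$, $f$ has $n$ nonzero integer roots $b_1^{(k)}, \ldots, b_n^{(k)} \ \mathrm{mod}\ p^k$, pairwise distinct mod $p$, and these lift through Hensel's lemma to $n$ distinct elements $\tilde b_1, \ldots, \tilde b_n \in \ZZ_p$. By Remark \ref{eigenvalue}, the vector $v_i^{(k)} := (1, b_i^{(k)}, \ldots, (b_i^{(k)})^{n-1})^T$ satisfies $B v_i^{(k)} \equiv b_i^{(k)} v_i^{(k)}\ \mathrm{mod}\ p^k$, so $v_i^{(k)}/p^k \in \TT^n$ is periodic under $B$ with period $\mathrm{ord}_{p^k}(b_i^{(k)})$. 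To single out an index with a large period I would invoke the LRS lemma at the end of Section~2: denoting by $T_k$ the LRS period mod $p^k$, one has $T_k = p^{k-t} T_1$ for all $k > t$, where $t = t(p, f)$ is fixed. Since the Vandermonde determinant $\prod_{i < j}(b_i^{(k)} - b_j^{(k)})$ is a unit mod $p^k$, the initial vector $e_n = (0, \ldots, 0, 1)$ decomposes as $\sum_i \alpha_i^{(k)} v_i^{(k)}$ with every $\alpha_i^{(k)} \in (\ZZ/p^k)^{\ast}$, and this identification gives $T_k = \mathrm{lcm}_i\, \mathrm{ord}_{p^k}(b_i^{(k)})$. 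Because the $p$-part of an lcm equals the maximum of its $p$-parts, some (fixed) index $i^{\ast}$ satisfies $\mathrm{ord}_{p^k}(b_{i^{\ast}}^{(k)}) \geq p^{k-t}$ for every $k > t$. Setting $b^{(k)} := b_{i^{\ast}}^{(k)}$, $v^{(k)} := v_{i^{\ast}}^{(k)}$, and taking $O_k$ to be the orbit of $v^{(k)}/p^k$ gives $T(O_k) \geq p^{k-t} \to \infty$.

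The main obstacle is the metric lower bound $d(O_k) \geq c\, p^{-k/n}$. My plan is to bound it via the full-rank lattice $L_k := \ZZ^n + (1/p^k)\,v^{(k)}\,\ZZ \subset \RR^n$, which has $\det L_k = 1/p^k$ and contains every orbit-difference representative, so $d(O_k) \geq \lambda_1(L_k)$. Passing to the dual $L_k^{\ast} = \{y \in \ZZ^n : \sum_j y_j (b^{(k)})^j \equiv 0\ \mathrm{mod}\ p^k\}$ of determinant $p^k$: given any nonzero $y \in L_k^{\ast}$, the polynomial $g(x) := \sum_j y_j x^j$ has degree less than $n$ and vanishes modulo $p^k$ at $b^{(k)}$, so Hensel-lifting to $\tilde b \in \ZZ_p$ yields $g(\tilde b) \in p^k \ZZ_p$; by irreducibility of $f$, the resultant $\mathrm{Res}(f, g) = \prod_\alpha g(\alpha) \in \ZZ \setminus\{0\}$ and has $p$-adic valuation at least $k$ from the factor $g(\tilde b)$. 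Since $\mathrm{Res}(f,g)$ is homogeneous of degree $n$ in the coefficients of $g$, $p^k \leq |\mathrm{Res}(f, g)| \leq C(f)\,\|y\|^n$, so $\lambda_1(L_k^{\ast}) \geq c_f\, p^{k/n}$. Minkowski's second theorem on $L_k^{\ast}$ then gives $\lambda_n(L_k^{\ast}) \leq C'_f\, p^{k/n}$, and the elementary transference inequality $\lambda_1(L_k)\cdot \lambda_n(L_k^{\ast}) \geq 1$ produces $\lambda_1(L_k) \geq c'_f\, p^{-k/n}$.

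Putting the two estimates together, $d(O_k)^n T(O_k) \geq (c'_f)^n\, p^{-k}\cdot p^{k-t} = (c'_f)^n\, p^{-t} > 0$, and $T(O_k) \to \infty$, so $\{O_k\}_{k > t}$ is the required uniformly distributed sequence. The crucial dimensional matching between the $p^{k-t}$ lower bound on $T(O_k)$ (from the LRS comparison) and the $p^{-k/n}$ lower bound on $d(O_k)$ (from the resultant) is precisely what makes the exponent $n$ in $d(O)^n T(O)$ work out; the resultant-based estimate on $\lambda_1(L_k^{\ast})$ is the main technical hurdle.
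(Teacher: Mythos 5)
Your proposal is correct, and it shares the paper's overall scaffolding (reduce to the companion matrix via Lemmas \ref{invertible} and \ref{similarity-matrix}; pick a prime from Corollary \ref{|xI-A|}; build eigenvector orbits $p^{-k}w_i$ and compare their periods with the linear recurrence period $T_k$ to find one index with $v_p$-part $k-t$), but the key metric lower bound $d(O_k)\gtrsim p^{-k/n}$ is obtained by a genuinely different mechanism. The paper argues directly: with $a$ the reduced representative of $B^iw_t - B^jw_t$ mod $p^k$, it considers the wedge $I(a)=\det(a\, Ba\,\cdots B^{n-1}a)$, observes $I(a)\equiv 0\bmod p^{k(n-1)}$ while $I(a)\neq 0$ by Lemma \ref{invertible}, and then uses Hadamard's inequality $|I(a)|\le C|a|^n$ to conclude $|a|\gtrsim p^{k(n-1)/n}$. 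You instead place the orbit differences inside the lattice $L_k=\ZZ^n+p^{-k}v^{(k)}\ZZ$ of covolume $p^{-k}$, bound $\lambda_1(L_k^\ast)$ from below by a resultant estimate (any dual vector $y$ gives a polynomial $g$ of degree $<n$ vanishing to order $\geq k$ $p$-adically at a lifted root of the irreducible $f$, forcing $|\mathrm{Res}(f,g)|\geq p^k$ yet $|\mathrm{Res}(f,g)|\le C_f\|y\|^n$), and then convert this to a lower bound on $\lambda_1(L_k)$ via Minkowski's second theorem plus the transference inequality $\lambda_1(L_k)\lambda_n(L_k^\ast)\geq 1$. This is more machinery than the paper invokes (geometry of numbers rather than a one-line Hadamard bound on a determinant), but it gives the same exponent and is conceptually illuminating: your $\mathrm{Res}(f,g)$ is essentially the paper's $I(a)$ in a dual guise, and the lattice-duality point of view makes it transparent why $n$ is the right exponent. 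One small caveat: your assertion that a single index $i^\ast$ works for all $k>t$ is correct but requires (as you implicitly use) that the $b_i^{(k)}$ are taken to be the truncations of fixed Hensel lifts $\tilde b_i\in\ZZ_p$; otherwise the maximizing index could a priori depend on $k$, which would still suffice for the argument but would not justify the word ``fixed''. The paper sidesteps this by simply letting the chosen index depend on $k$.
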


\begin{proof}
If $n=1$, $Ax=c_0x\mod 1$ is the induced endomorphism on $S^1$, with $c_0\neq 1,-1$. Choose odd prime $p$ such that $(p,c_0)=1$. Let $v_k=p^{-k}$, $O_k = \orb^+(v_k)$. From Corollary \ref{order} we know that $d(O_k)\geq p^{-k}$, and
$$T(O_k)=\min\{n>0: p^k\mid (c_0)^n-1\}\geq p^{k-t}, \,t=v_p((c_0)^d-1),\, d=\min\{n>0:p\mid (c_0)^n-1\},$$
thus $d(O_k)T(O_k)\geq p^{-t}$. The conclusion holds.\\
\indent Now assume  $n>1$. Take $e\in\ZZ^{n}, e\not=0$ and let $P=P(e,A)$ be defined by the formula $(P)$.
Then $PA=BP$, where $B$ is the companion matrix of $f(x)$. By Lemma \ref{invertible} and Lemma \ref{similarity-matrix}, it is enough to prove the proposition for $B$.\\
\indent Take an odd prime $p$ satisfying Corollary \ref{|xI-A|}. Then $f(x)\equiv 0\mod p^k$ has solutions $x\equiv b_i\mod p^k$ and $b_i\not\equiv b_j\mod p$ for $i\neq j$.
Denote $w_i=(1,b_i,b_i^2,\cdots,b_i^{n-1})^T$, then $Bw_i\equiv b_iw_i\mod p^k$. Take $v^{(i)}=p^{-k}w_i$, $1\leq i\leq n$.
Consider the forward orbit $O_k^t$ of $v^{(t)}$. By Euler's Theorem, $$B^{p^{k-1}(p-1)}w_i\equiv b_i^{p^{k-1}(p-1)}w_i\equiv w_i\mod p^k.$$ Hence $O_k^t$ is a periodic orbit.
Given $w\in\ZZ^n,$ define $$I(w)=w\wedge Bw\wedge\cdots\wedge B^{n-1}w =\det (w \, Bw \, \cdots B^{n-1}w).$$
Denote $a= (a_1,a_2,\cdots,a_n)^T \equiv B^iw_t-B^jw_t \mod p^k$ such that $|a_i|<\frac{p^k}{2}$, $1\leq i\leq n$.
\begin{equation*}
\begin{split}
B^sa &= B^s(B^iw_t-B^jw_t+p^kC_1)\\
&=B^s(B^i-B^j)w_t+p^kC_2\\
&=(B^i-B^j)(b_t)^sw_t+p^kC_3\\
&=(b_t)^s(B^iw_t-B^jw_t)+p^kC_3\\
&=(b_t)^sa+p^kC_4
\end{split}
\end{equation*}
where $C_i\in \ZZ^n$, $1\leq i\leq 4$.

 Then $$
I(a)=a \wedge (b_ta+p^km_1)\wedge \cdots \wedge ((b_t)^{n-1}a+p^km_{n-1})=a \wedge p^km_1\wedge\cdots\wedge p^km_{n-1}\equiv 0\mod p^{k(n-1)}
$$
where $m_i\in\ZZ^n$, $1\leq i\leq n-1$. From Lemma \ref{invertible}, we know that $I(a)\neq 0$, so $|I(a)|\geq p^{k(n-1)}$.
On the other hand, we have
$$|I(w)|=|w\wedge Bw\wedge \cdots\wedge B^{n-1}w|\leq |w||Bw|\cdots|B^{n-1}w|\leq \prod_{i=1}^{n-1}||B^i|||w|^n.$$
So $$|a|^n\geq C_0|I(a)|\geq C_0p^{(n-1)k}.$$
We get the estimation of $d(O^t_k)$ from below: $$d(O^t_k)^n\geq C_0\frac{p^{(n-1)k}}{p^{nk}}=C_0p^{-k}.$$

Now we estimate the period $T^{(t)}_k$ of $O^t_k$.
Consider the linear recurrence sequence $\{v_i\}$ satisfying that
$$v_{i+n}=c_{n-1}v_{i+(n-1)}+c_{n-2}v_{i+n-2}+\cdots +c_0v_i.\eqno {(**)}$$
Denote by $\{v_i\}$ the (general) solution of $(**)$ with the initial condition $v = (v_0, v_1, \cdots , v_{n-1})$ and $\{u_i\}$ the solution with the initial condition $(u_0,\cdots, u_{n-2}, u_{n-1})=(0,\cdots,0,1)$.  In the following, we will write the general solution $\{v_i\}$ as a linear combination of the special solution $\{u_i\}$.

Let $x=(x_1,x_2,\cdots,x_n)$ be the solution of $xQ=v$,
where
$$
Q=\left(\begin{matrix}
0&0&\cdots&0&0&1\\
0&0&\cdots&0&1&u_n\\
0&0&\cdots&1&u_n&u_{n+1}\\
\cdots&\cdots&\cdots&\cdots&\cdots&\cdots\\
1&u_n&\cdots&u_{2n-4}&u_{2n-3}&u_{2n-2}
\end{matrix}\right)=\left(\begin{matrix}
-c_1&-c_2&\cdots&-c_{n-2}&-c_{n-1}&1\\
-c_2&-c_3&\cdots&-c_{n-1}&1&0\\
\cdots&\cdots&\cdots&\cdots&\cdots&\cdots\\
-c_{n-1}&1&\cdots&0&0&0\\
1&0&\cdots&0&0&0
\end{matrix}\right)^{-1}.
$$
Then
\begin{equation*}
\begin{split}
v_m =& (x_1,x_2,\cdots,x_n)(u_m,u_{m+1},\cdots,u_{m+n-1})^T\\
=&(v_0, v_1, \cdots, v_{n-1})
Q^{-1}(u_m,u_{m+1},\cdots,u_{m+n-1})^T\\
=& vQ^{-1}U_m
\end{split}
\end{equation*}
where $$U_m = (u_m,u_{m+1},\cdots,u_{m+n-1})^T.$$
Since $\{b_t^m\}$ is the solution of $(**) \mod p^k$ with the initial condition $v=(1,b_t,\cdots, b_t^{n-1})$, one has $b_t^m\equiv (1,b_t,\cdots, b_t^{n-1})Q^{-1}U_m\mod p^k$. It means that $T^{(t)}_k\mid T^u_k$, where $T^u_k$ is the period of $u_n\mod p^k$.

On the other hand, we have
$$
\left(\begin{matrix}b_1^m\\b_2^m\\\cdots\\b_n^m\end{matrix}\right)\equiv
\left(\begin{matrix}
1&b_1&\cdots&b_1^{n-1}\\1&b_2&\cdots&b_2^{n-1}\\\cdots&\cdots&\cdots&\cdots\\1&b_n&\cdots&b_n^{n-1}
\end{matrix}\right)
Q^{-1}
\left(\begin{matrix}u_{m}\\u_{m+1}\\\cdots\\u_{m+n-1}\end{matrix}\right)\mod p^k.
$$

Since
$$\det \left(\begin{matrix}1&b_1&\cdots&b_1^{n-1}\\1&b_2&\cdots&b_2^{n-1}\\\cdots&\cdots&\cdots&\cdots\\1&b_n&\cdots&b_n^{n-1}
\end{matrix}\right)\equiv \prod_{i<j}(b_j-b_i)\not\equiv 0\mod p,$$

we have that
$$
\left(\begin{matrix}1&b_1&\cdots&b_1^{n-1}\\1&b_2&\cdots&b_2^{n-1}\\\cdots&\cdots&\cdots&\cdots\\1&b_n&\cdots&b_n^{n-1}\end{matrix}\right)^{-1}
\left(\begin{matrix}b_1^m\\b_2^m\\\cdots\\b_n^m\end{matrix}\right)\equiv
Q^{-1}
\left(\begin{matrix}u_{m}\\u_{m+1}\\\cdots\\u_{m+n-1}\end{matrix}\right)\mod p^k.
$$
Thus $u_m \equiv r_1b_1^m+r_2b_2^m+\cdots+r_nb_n^m\mod p^{k}$ for some $r_1, r_2, \cdots, r_n\in\ZZ$. It implies that the period $T^u_k$ of $u_n\mod p^k$ is a factor of $[T^{(1)}_k,T^{(2)}_k,\cdots, T^{(n)}_k]$. Combining with $T^{(i)}_k\mid T^u_k$, we have that $[T^{(1)}_k,T^{(2)}_k,\cdots, T^{(n)}_k]=T^u_k$. It means that there is $i$ such that $$
v_p(T^{(i)}_k)=v_p(T^u_k)=k-t.$$
So there is a positive number $C$ such that $d(O^i_k)^nT^{(i)}_k\ge C$.
\end{proof}

Now, suppose that $f(x)=\det(I-xA)$ is reducible on $\QQ$. From Gauss Lemma we know that there are two integer polynomial $g(x)$ and $h(x)$ such that $f(x)=g(x)h(x)$. Furthermore $f(x)=f_1(x)^{\alpha_1}f_2(x)^{\alpha_2}\cdots f_r(x)^{\alpha_r}$, where $f_i(x)(1\leq i\leq r)$ are different integer polynomials and irreducible on $\QQ$, and $\alpha_i\in\NN(1\leq i\leq r)$.\\
\indent Denote $|v\mod 1|=\min \{|v-n|:n\in \ZZ^k\}$ for $v\in\RR^k$.
\begin{proposition}
Assume $A$ is an ergodic endomorphism on an $n$-torus. If the minimal polynomial of $A$ is the characteristic polynomial $\det(xI-A)=f(x)=x^n-c_{n-1}x^{n-1}-\cdots -c_1x-c_0$, and $f(x)=g(x)^r$, $g(x)=x^m-d_{m-1}x^{m-1}-\cdots -d_1x-d_0$ is an integer polynomial and irreducible on $\QQ$,
then $A$ admits a sequence of uniformly distributed periodic orbits.
\end{proposition}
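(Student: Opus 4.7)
The plan is to mirror the proof of Proposition~\ref{n-irreducible}, applying the Frobenius Density Theorem to $g$ rather than to $f$. First, I use the hypothesis that the minimal polynomial equals the characteristic polynomial to reduce to the companion-matrix case. This hypothesis is equivalent to saying that $A$ admits a cyclic vector $e \in \ZZ^n$, i.e., $e, eA, \dots, eA^{n-1}$ are linearly independent over $\QQ$. The argument of Lemma~\ref{invertible} then shows $P = P(e, A)$ is invertible in $M_n(\ZZ)$, and by construction $PA = BP$ where $B$ is the companion matrix of $f$. Lemma~\ref{similarity-matrix} thus reduces the problem to the case $A = B$.

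Applying Corollary~\ref{|xI-A|} to the irreducible polynomial $g$, I fix a prime $p$ such that, for every $\ell \ge 1$, $g$ has $m$ distinct nonzero roots $b_1, \dots, b_m$ modulo $p^\ell$. Set $k = r\ell$. Since $f = g^r$, we have $f(b_i) = g(b_i)^r \equiv 0 \mod p^k$, so by Remark~\ref{eigenvalue} the vector $w_i = (1, b_i, \dots, b_i^{n-1})^T$ satisfies $B w_i \equiv b_i w_i \mod p^k$. I take $O_\ell^{(i)} = \orb^+(v^{(i)})$ with $v^{(i)} = p^{-k} w_i \in \TT^n$; the period is $T(O_\ell^{(i)}) = \mathrm{ord}_{p^k}(b_i) \ge p^{k-t}$ by Corollary~\ref{order}.

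For the distance estimate, for any $a \in \ZZ^n$ with $a \equiv (b_i^s - b_i^{s'}) w_i \mod p^k$ and $|a|_\infty < p^k/2$, the same wedge-product expansion as in Proposition~\ref{n-irreducible} shows $p^{k(n-1)} \mid I(a) := \det(a, Ba, \dots, B^{n-1} a)$. When $I(a) \ne 0$, this yields $|a|^n \ge C_0 p^{k(n-1)}$ and hence $d(O_\ell^{(i)}) \ge C_0^{1/n} p^{-k/n}$, giving $d^n T \ge C$.

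The principal new obstacle, absent in the irreducible case, is the possible vanishing of $I(a)$: because $B$ is the companion matrix of a reducible polynomial, it admits the proper invariant subspace $V = g(B) \QQ^n$ of dimension $(r-1)m$, and $I(a) = 0$ exactly when $a \in V$. A direct column-reduction computation gives $I(w_i) = \pm f(b_i)^{n-1} \ne 0$, so $w_i$ itself is cyclic, but orbit-differences can fall into $V$. The key observation is the identity $g(B)^{r-1} w_i \equiv g(b_i)^{r-1} w_i \mod p^k$ combined with $v_p(g(b_i)^{r-1}) = (r-1)\ell$, which shows that if $a \in V$ then $p^\ell \mid (b_i^s - b_i^{s'})$; writing $a = p^\ell \tilde a$ then produces a new vector $\tilde a$ satisfying $B \tilde a \equiv b_i \tilde a \mod p^{k-\ell}$, to which the same wedge analysis applies with $k$ replaced by $k - \ell$. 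Iterating this descent and carefully tracking the resulting $p$-adic valuations against the lattice $V \cap \ZZ^n$ yields the desired lower bound on $|a|_\infty$ in the vanishing case as well, and the inequality $d^n T \ge C$ holds uniformly in $\ell$.
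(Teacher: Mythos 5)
Your proposal takes a genuinely different route from the paper: it keeps a single prime $p$ and the companion matrix of $f=g^r$, and tries to salvage the wedge argument of Proposition~\ref{n-irreducible} by a $p$-adic descent when $I(a)=0$. The paper instead conjugates $A$ to a block matrix $B=E+N$ with $r$ copies of the companion matrix $D$ of $g$ on the diagonal (and identity blocks on the superdiagonal), chooses $r$ \emph{distinct} primes $p_1,\dots,p_r$ from Corollary~\ref{|xI-A|} applied to $g$, builds $v=(p_1^{-k_1}w_1,\dots,p_r^{-k_r}w_r)$ with $k_i$ tuned so the $p_i^{k_i}$ are comparable, and uses coprimality of the $p_i$ together with the coupling coming from $N$ to spread the orbit in all $n$ directions. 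These are structurally different strategies.

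However, your descent step has a genuine gap, and as written it does not make progress. After you deduce $p^\ell\mid c$ and write $a=p^\ell\tilde a$, the new vector $\tilde a$ still lies in $V=\ker g^{r-1}(B)$, because $V$ is a $\QQ$-subspace and $\tilde a=a/p^\ell$. Consequently the full $n$-dimensional wedge $I(\tilde a)=\det(\tilde a,B\tilde a,\dots,B^{n-1}\tilde a)$ is again identically zero, so ``the same wedge analysis with $k$ replaced by $k-\ell$'' yields nothing. Worse, if you try to re-run the divisibility step, you get $\tilde c\, g^{r-1}(b)\equiv 0\ (\mathrm{mod}\ p^{k-\ell})$ with $k-\ell=(r-1)\ell=v_p(g^{r-1}(b))$, which is an empty condition: no further power of $p$ is forced to divide $\tilde c$. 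To proceed one would have to replace the $n$-dimensional wedge by an $(r-1)m$-dimensional wedge taken inside the $B$-invariant lattice $\ker g^{r-1}(B)\cap\ZZ^n$ (and at step $j$, inside $\ker g^{r-j}(B)\cap\ZZ^n$), together with uniform norm bounds for $B$ restricted to these sublattices. That is a substantial construction that is not in your proposal, and the phrase ``carefully tracking\dots against the lattice $V\cap\ZZ^n$'' does not supply it. A second, smaller gap: the step $v_p(g(b_i)^{r-1})=(r-1)\ell$ assumes $v_p(g(b_i))=\ell$ \emph{exactly}, which Corollary~\ref{|xI-A|} does not guarantee; indeed when $m=1$ and $g(x)=x-d_0$ one has $g(b_i)=0$, so $v_p=\infty$ and the descent gives nothing at all. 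One must therefore explicitly choose the representative $b_i$ (e.g.\ by perturbing the Hensel lift by $p^\ell$) so that $v_p(g(b_i))=\ell$, and note that $m\ge 2$ is needed for this to be possible; the case $m=1$ must be excluded separately (it corresponds to $f=(x-d_0)^r$, which cannot be ergodic unless $|d_0|\ge 2$, and even then your $w_i$ is an exact eigenvector and the orbit of $p^{-k}w_i$ collapses to a one-dimensional subtorus). These are concrete failure points, not merely unchecked routine.
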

\begin{proof}
Let $D$ be the companion matrix of $g(x)$. Denote $$B=\left(\begin{matrix}D&I&0&\cdots&0&0\\0&D&I&\cdots&0&0\\\cdots&\cdots&\cdots&\cdots&\cdots&\cdots
\\0&0&0&\cdots&D&I\\0&0&0&\cdots&0&D\end{matrix}\right)\in M_n(\ZZ),$$
and $E={\rm diag}\{D,D,\cdots,D\}$ with $r$ numbers of $D$. Then $B=E+N, EN=NE, N^r=0$.
Then for any polynomial $h(x)$, 
$$h(B)=h(E)+h'(E)N+\cdots +((r-1)!)^{-1}h^{(r-1)}(E)N^{r-1}.\eqno(T)$$
Since the minimal polynomial of $D$ is $g$ and $(g,g')=1$, we have $(g^i)^{(i)}(D)\neq 0$ for $1\leq i\leq r-1$.
Letting $h=g^i$ in the equation $(T)$, we get $g^i(B)\neq 0$ for $1\leq i\leq r-1$.
Combining with $\det(xI-B)=\det(xI-D)^r=g^r(x)=f(x)$, we know that the minimal polynomial of $B$ is $f(x)$.
So there are $\alpha\in\ZZ^n\cap(\QQ^n-\ker(g^{r-1}(A)))$ and $\beta\in\ZZ^n\cap(\QQ^n-\ker(g^{r-1}(B)))$. Then 
$\{s(x)\in \ZZ[x]:\alpha s(A)=0\}=\{s(x)\in \ZZ[x]:\beta s(B)=0\}=(g^r(x))=(f(x))$.
Hence
$\{\alpha A^i\}_{i=0}^{n-1}$ and $\{\beta B^i\}_{i=0}^{n-1}$ are linearly independent over $\QQ$ respectively. Denote
$$Q=P(\alpha, A)\quad R =P(\beta, B).
$$
Then we have
$$QA = A_*Q\quad \text{and}\quad RB=A_*R, $$
where $ A_*$ is the companion matrix of $f(x)$.
So $R^{-1}QA=R^{-1}A_*Q=BR^{-1}Q$, by multiplying a constant, we get that there is $P\in M_n(\ZZ), \det (P)\not=0$ such that $PA=BP$.\\
\indent Take primes $p_1>p_2>\cdots >p_r$ satisfying the requirement in Corollary \ref{|xI-A|} for $g(x)=\det(xI-D)$. Let
$$k_1=k, k_{i+1}=\left[\frac{k_i\ln p_i}{\ln p_{i+1}}\right]\quad \Rightarrow\quad C^{-1}=\prod_{1\leq t\leq r}p_t^{-1}<\frac{p_i^{k_i}}{p_j^{k_j}}<\prod_{1\leq t\leq r}p_t=C$$
and
if $k_1'=k+1, k_{i+1}'=\left[\frac{k_i'\ln p_i}{\ln p_{i+1}}\right]$, we have
$$
\prod_{1\leq i\leq r}p_i^{k_i'}\leq (p_1^{k})^rp_1^r\leq \prod_{1\leq i\leq r}p_i^{k_i}p_1^rC^r.
$$
This will be used later in the proof of Theorem \ref{metric}, where $$m_k=\prod_{1\leq i\leq r}p_i^{k_i}\qquad\text{and}\qquad m_{k+1}=\prod_{1\leq i\leq r}p_i^{k_i'}.$$
There are $w_i\in\ZZ^m-\{0\}$ and $b_i\in\ZZ$ such that
$$Dw_i\equiv b_iw_i\mod p_i^{k_i}, \quad p_i^{k_i}\mid C_iT(\orb^+(p_i^{-k_i}w_i)), \quad d(\orb^+(p_i^{-k_i}w_i))^m\geq C_i'^{-1}p_i^{-k_i},$$
where $C_i, C_i'$ are constants independent of $k$. Let $$v = (p_1^{-k_1}w_1, p_2^{-k_2}w_2,\cdots,p_r^{-k_r}w_r).$$
Since $$B^j=\left(\begin{matrix}D^j&C^1_jD^{j-1}&\cdots&C^{r-1}_jD^{j-r+1}\\0&D^j&\cdots&C^{r-2}_jD^{j-r+2}\\
\cdots&\cdots&\cdots&\cdots\\0&0&\cdots&D^j\end{matrix}\right)\quad \text{and}\quad C^i_j=\frac{j}{i}C^{i-1}_{j-1},$$
we have that $B^{\prod_{1\leq i\leq r}iC_iT(\orb^+(p_i^{-k_i}w_i))}v\equiv v\mod 1$, i.e., $\orb^+(v)$ is a periodic orbit.\\
\indent Suppose that $B^lv\equiv v\mod 1$. Since $(p_i,p_j)=1$ for $i\neq j$, we have $D^l(p_i^{-k_i}w_i)\equiv p_i^{-k_i}w_i \mod 1$ for $1\leq i\leq r$. Thus $T(\orb^+(p_i^{-k_i}w_i))\mid l$, $\prod_{1\leq i\leq r}p_i^{k_i}\mid l\prod_{1\leq i\leq r}C_i$.
So the period $T(\orb^+(v,B))$ satisfies that $$\prod_{1\leq i\leq r}p_i^{k_i}\left | T(\orb^+(v,B))\prod_{1\leq i\leq r}C_i.\right.$$

For any $0\leq j<i<T(\orb^+(v,B))$, we estimate $|B^iv-B^jv\mod 1|$ as follows. Denote $v_t=p_t^{-k_t}w_t$.
If $D^iv_r-D^jv_r\not\equiv 0\mod 1$, then $|D^iv_r-D^jv_r\mod 1|^m\geq C_r'^{-1}p_r^{-k_r}$. So
$$
|B^iv-B^jv\mod 1|^n\geq |D^iv_r-D^jv_r\mod 1|^{mr}\geq
(C_r'^{-1})^r(p_r^{-k_r})^r>(C_r'^{-1}C^{-1})^r\prod_{1\leq t\leq r}p_t^{-k_t}.$$
If $D^iv_r-D^jv_r\equiv 0\mod 1$, suppose that $t$ is the maximal number such that $$\left(\sum_{s=0}^{r-t}C^{s}_iD^{i-s}v_{s+t}\right)-\left(\sum_{s=0}^{r-t}C^{s}_jD^{j-s}v_{s+t}\right)\not\equiv 0\mod 1.$$
Then we have that for any $t<l\leq r$,
$$\left(\sum_{s=0}^{r-l}C^{s}_iD^{i-s}v_{s+l}\right)-\left(\sum_{s=0}^{r-l}C^{s}_jD^{j-s}v_{s+l}\right)\equiv 0\mod 1$$
Since $(p_l^{k_l},\Pi_{s=1}^{r-l}p_{s+l}^{k_{s+l}})=1$, there are $a,b\in \ZZ$ such that
$ap_l^{k_l}+b\Pi_{s=1}^{r-l}p_{s+l}^{k_{s+l}}=1$, then
\begin{equation*}
\begin{split}
D^iv_l-D^jv_l&\equiv ap_l^{k_l}(D^iv_l-D^jv_l)+b\prod_{s=1}^{r-l}p_{s+l}^{k_{s+l}}(D^iv_l-D^jv_l)\\
&\equiv b\prod_{s=1}^{r-l}p_{s+l}^{k_{s+l}}\left(\sum_{s=0}^{r-l}C^{s}_iD^{i-s}v_{s+l}-C^{s}_jD^{j-s}v_{s+l}\right)\\
&\equiv 0\mod 1
\end{split}
\end{equation*}
which means that $T(\orb^+(v_l,D))\mid i-j$, and thus
$p_l^{k_l}\left | C_l(i-j)\right.$, $\prod_{t<l\leq r}p_l^{k_l}\left| (i-j)\prod_{t<l\leq r}C_l\right.$, i.e., there are constants $k_{l,0}$ independent of $k$, such that for sufficiently large $k$,
$$
i\equiv j\mod \prod_{t<l\leq r}p_l^{k_l-k_{l,0}}.
$$
Thus
$$
\prod_{t<l\leq r}p_l^{k_{l,0}}\left(\sum_{s=1}^{r-t}C^{s}_iD^{i-s}v_{s+t}
-\sum_{s=1}^{r-t}C^{s}_jD^{j-s}v_{s+t}\right)\equiv 0\mod 1.
$$
Denote $L=\prod_{t<l\leq r}p_l^{k_{l,0}}$. Since $D(Lw_t)\equiv b_t(Lw_t)\mod p_t^{k_t}$, using method for the irreducible case, we can prove that $|D^i(Lv_t)-D^j(Lv_t)\mod 1|^m\geq C_t'^{-1}p_t^{-k_t}$ (or equal to zero). So for sufficiently large $k$, we have that
\begin{equation*}
\begin{split}
&(L|B^iv-B^jv\mod 1|)^n\\
&\geq \left(L\left|\sum_{s=0}^{r-t}C^{s}_iD^{i-s}v_{s+t}-\sum_{s=0}^{r-t}C^{s}_jD^{j-s}v_{s+t}\mod 1\right|\right)^n\\
&\geq \left|L\left(\sum_{s=0}^{r-t}C^{s}_iD^{i-s}v_{s+t}-\sum_{s=0}^{r-t}C^{s}_jD^{j-s}v_{s+t}\right)\mod 1\right|^n \quad (\text{or} \geq 1)\\
&= |D^i(Lv_t)-D^j(Lv_t)\mod 1|^n \quad (\text{or} = 1)\\
&\geq C_t'^{-r}(p_t^{-k_t})^r\\
&\geq (C_t'^{-1}C^{-1})^r\prod_{1\leq t\leq r}p_t^{-k_t}.
\end{split}
\end{equation*}
So there is a constant $C$ such that $C\leq d(\orb^+(v,B))^nT(\orb^+(v,B))$. From the conclusion for system $B$ to system $A$ is due to Lemma \ref{similarity-matrix}.
\end{proof}

\indent We will use Structure Theorem for Modules over Polynomial Rings to simplify the reducible case of Theorem \ref{metric}, which is stated as follows before the proof of Theorem \ref{metric}.

\begin{theorem}\label{STMPR}
\cite[14.8.3]{Artin}({\rm Structure Theorem for Modules over Polynomial Rings})\\
\indent Let $R=F[t]$ be the ring of polynomials in one variable with coefficients in a field $F$.
\begin{itemize}
\item[(1)] Let $V$ be a finitely generated module over $R$. Then $V$ is a direct sum of cyclic modules
$C_1,C_2,\dots, C_k$ and a free module $L$, where $C_i$ is isomorphic to $R/(d_i)$, the elements $d_1,d_2,\dots, d_k$ are monic polynomials of positive degree, and $d_1|d_2|\dots|d_k$.
\item[(2)] The same assertion as $(1)$, except that the condition that $d_i$ divides $d_{i+1}$ is replaced by:
Each $d_i$ is a power of a monic irreducible polynomial.
\end{itemize}
\end{theorem}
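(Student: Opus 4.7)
The plan is to reduce to the classical structure theorem for finitely generated modules over a PID, which $R=F[t]$ is (since it is a Euclidean domain under polynomial degree). First I would fix a finite generating set $v_1,\dots,v_m$ of $V$ and let $\pi\colon R^m\to V$ be the resulting surjection. Because $R$ is Noetherian, the submodule $K=\ker\pi$ is itself finitely generated, say by $w_1,\dots,w_\ell$; writing each $w_j$ as a column in $R^m$ yields a presentation matrix $M\in M_{m\times\ell}(R)$ with $V\cong R^m/\mathrm{im}(M)$. Everything then reduces to putting $M$ into a canonical form under left and right multiplication by matrices invertible over $R$, since such operations correspond to changes of basis that preserve the isomorphism class of the quotient.

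The core step is Smith normal form: using the Euclidean algorithm in $R$ via elementary row and column operations, I would show that $M$ is equivalent to a diagonal matrix $\mathrm{diag}(d_1,\dots,d_r,0,\dots,0)$ with monic polynomials satisfying $d_1\mid d_2\mid\cdots\mid d_r$. The standard trick selects a nonzero entry of minimal degree as pivot, uses division with remainder to clear its row and column (iterating whenever smaller-degree entries appear elsewhere), and then applies a minimality argument to arrange that the pivot divides every remaining entry of the reduced submatrix. After discarding unit $d_i$'s and splitting off the free part coming from the zero columns, one obtains $V\cong R/(d_1)\oplus\cdots\oplus R/(d_k)\oplus R^{m-r}$, which is part $(1)$.

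For part $(2)$ I would apply the Chinese Remainder Theorem within each $R/(d_i)$. Since $F[t]$ is a UFD, each invariant factor factors uniquely as $d_i=\prod_j p_{ij}^{e_{ij}}$ into powers of pairwise distinct monic irreducibles, and coprimality of the prime-power factors yields $R/(d_i)\cong\bigoplus_j R/(p_{ij}^{e_{ij}})$. Regrouping all such cyclic summands across $i$ gives the elementary-divisor decomposition asserted in $(2)$.

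The main obstacle will be the careful execution of Smith normal form: verifying that the reduction terminates and that the divisibility chain $d_1\mid d_2\mid\cdots\mid d_r$ can actually be enforced rather than only obtaining an arbitrary diagonal form. Termination uses the well-ordering of polynomial degrees (the minimal pivot degree cannot strictly decrease forever), while the divisibility relations come from a minimality argument on the pivot at each stage. Uniqueness of the $d_i$ and of the prime-power exponents, although not required for the stated existence result, can be recovered from the theory of Fitting ideals.
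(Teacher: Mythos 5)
This statement is quoted from Artin's \emph{Algebra} (14.8.3) and is not proved in the paper, so there is no in-paper argument to compare against; your Smith normal form proof is the standard one and is essentially the argument given in the cited reference. The reduction to a presentation matrix, diagonalization with the divisibility chain via Euclidean division and a minimality argument on the pivot degree, and the passage from invariant factors to elementary divisors via the Chinese Remainder Theorem are all correct as you describe them.
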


\noindent{\bf Proof of Theorem \ref{metric}. }
If we consider $d(\lambda)v$ as $vd(A)$, then
there is a decomposition of $V=\QQ^n$ as $$V=\QQ[\lambda]v_1\oplus \QQ[\lambda]v_2\oplus \cdots \oplus\QQ[\lambda]v_s$$
such that $\{d(\lambda)\in\QQ[\lambda]:v_i d(A)=0\}=(d_i(\lambda))$, and $d_i$ is a power of a monic irreducible polynomial (See Theorem \ref{STMPR}(2). There is no free module here, since $f(A)=0$, where $f(x)=\det(xI-A)$). Without loss of generality, we can assume that $v_i\in\ZZ^n$ and $d_i(\lambda)\in\ZZ[\lambda]$.\\
\indent Let $$P=\left(\begin{matrix}P_1\\P_2\\\cdots\\P_s\end{matrix}\right),\quad P_i = \left(\begin{matrix}v_i\\v_iA\\\cdots \\v_iA^{r_i-1}\end{matrix}\right), \quad r_i=\deg(d_i(\lambda))$$
Then $P$ is invertible, and $PA=JP$, where $J={\rm diag}\{J_1,J_2,\cdots, J_s\}$, and the minimal polynomial of $J_i$ is $|\lambda I-J_i|=d_i(\lambda)$, $1\leq i\leq s$.\\
\indent Find $m^{(1)}_1,m^{(2)}_1,\cdots, m^{(s)}_1$ such that $(m^{(i)}_1,m^{(j)}_1)=1$ for $i\neq j$ and
for any $k$, $1<\frac{m^{(i)}_{k+1}}{m^{(i)}_k}\leq C_i$, $m^{(i)}_k$ and $m^{(i)}_{k+1}$ have the same prime factors, there are $u_i^k\in \QQ^{r_i}$ s.t. $m^{(i)}_k\mid C_i'T(\orb^+(u_i^k,J_i))$, $d(\orb^+(u_i^k,J_i))^{r_i}\geq C_i''(m^{(i)}_k)^{-1}$, where $C_i$, $C_i'$ and $C_i''$ are constants independent of $k$. Let
$v = (u_1^{k_1},u_2^{k_2},\cdots,u_s^{k_s})$, and consider $\orb^+(v,J)$.

Firstly the period $T$ of $\orb^+(v,J)$ satisfies that $\prod_{1\leq i\leq s}m^{(i)}_{k_i}\mid \prod_{1\leq i\leq s}C_i'T$, and thus
$$\prod_{1\leq i\leq s}C_i'T\geq \prod_{1\leq i\leq s}m^{(i)}_{k_i}.$$ Secondly,
$$
|J^iv-J^jv\mod 1|\geq \max_{1\leq t\leq s}|J_t^iu_t^{k_t}-J_t^ju_t^{k_t}\mod 1|,
$$
while
$$
|J_t^iu_t^{k_t}-J_t^ju_t^{k_t}\mod 1|^{r_t}=0\quad\text{or}\quad\geq C_t''(m^{(t)}_{k_t})^{-1}.$$
We choose $\{k_j\}$ as follows.
$$m^{(i+1)}_{k_{i+1}}\leq (m^{(i)}_{k_i})^{\frac{r_{i+1}}{r_i}}\leq m^{(i+1)}_{k_{i+1}+1}.$$
Then
$$
m^{(i+1)}_{k_{i+1}}\leq (m^{(i)}_{k_i})^{\frac{r_{i+1}}{r_i}}\leq C_{i+1}m^{(i+1)}_{k_{i+1}}.$$
Then for the constant $C=\prod_{1\leq i\leq s}C_i^{\frac{1}{r_i}}$ we have
$$
C^{-1}(m^{(i)}_{k_i})^{\frac{1}{r_i}}\leq (m^{(j)}_{k_j})^{\frac{1}{r_j}}\leq C(m^{(i)}_{k_i})^{\frac{1}{r_i}},$$
which means that
$$
(m^{(t)}_{k_t})^{\frac{n}{r_t}}=\prod_{1\leq j\leq s}(m^{(t)}_{k_t})^{\frac{r_j}{r_t}}\leq \prod_{1\leq j\leq s}C^{r_j}(m^{(j)}_{k_j})
=C^n\prod_{1\leq j\leq s}m^{(j)}_{k_j}.
$$
So $$d(\orb^+(v,J))^n\geq \left(\min_{1\leq t\leq s}\{(C''_t)^{\frac{1}{r_t}}\}\right)^nC^{-n}\left(\prod_{1\leq j\leq s}m^{(j)}_{k_j}\right)^{-1}.$$

Combining the inequality of $T(\orb^+(v,J))$ and $d(\orb^+(v,J))$, we get that $$T(\orb^+(v,J))d(\orb^+(v,J))^n\geq \left(\min_{1\leq t\leq s}\{(C''_t)^{\frac{1}{r_t}}\}\right)^nC^{-n}\left(\prod_{1\leq i\leq s}C_i'\right)^{-1}.$$
From the conclusion for system $J$ to system $A$ is due to Lemma \ref{similarity-matrix}.
\qed

\subsection{Proof of Proposition 1.4 and Corollary 1.5}
Before proving Proposition \ref{transfer metric to measure}, let us recall the classical theorem about the absolutely continuous measure with respect to an ergodic measure, which will be used later in the proof of Proposition \ref{transfer metric to measure}.
\begin{theorem}\label{absolutely continuous}
\cite[Page 15, Theorem 2(1)]{Sinai}
Suppose we are given a dynamical system $T$ on the measurable space $(M,\mathscr{B})$ along with two normalized measures $\mu_1, \mu_2$ defined on $\mathscr{B}$ and invariant with respect to $T$. If the measure $\mu_1$ is ergodic with respect to $T$ while $\mu_2$ is absolutely continuous with respect to $\mu_1$ ($\mu_2$ is not assumed ergodic a priori), then $\mu_1=\mu_2$.
\end{theorem}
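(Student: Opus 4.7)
The plan is to show that the Radon--Nikodym derivative $\phi = d\mu_2/d\mu_1 \in L^1(\mu_1)$ must be $\mu_1$-a.e. constant, and then normalization forces the constant to be $1$. Ergodicity of $\mu_1$ enters through Birkhoff's pointwise ergodic theorem applied to $\phi$ (or to test functions paired with $\phi$), and the $T$-invariance of $\mu_2$ enters through a change-of-variables identity relating integrals against $\mu_2$ to integrals of $\phi$ against $\mu_1$.

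Concretely, I would proceed as follows. First, since $\mu_2 \ll \mu_1$, write $\mu_2 = \phi\, \mu_1$ with $\phi \geq 0$ and $\int \phi\, d\mu_1 = \mu_2(M) = 1$. Fix an arbitrary bounded measurable $g : M \to \RR$. Using $T$-invariance of $\mu_2$ and the defining property of $\phi$, for every $k \geq 0$,
\begin{equation*}
\int g\, \phi\, d\mu_1 \;=\; \int g\, d\mu_2 \;=\; \int g\circ T^k\, d\mu_2 \;=\; \int (g\circ T^k)\,\phi\, d\mu_1.
\end{equation*}
Averaging over $k = 0, 1, \ldots, n-1$ yields
\begin{equation*}
\int g\, \phi\, d\mu_1 \;=\; \int \left(\frac{1}{n}\sum_{k=0}^{n-1} g\circ T^k\right) \phi\, d\mu_1.
\end{equation*}

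Next, I would invoke Birkhoff's pointwise ergodic theorem for the ergodic system $(T,\mu_1)$: the averages $\frac{1}{n}\sum_{k=0}^{n-1}g\circ T^k$ converge $\mu_1$-a.e.\ to the constant $\int g\, d\mu_1$. Since these averages are uniformly bounded by $\|g\|_\infty$, and $\|g\|_\infty\, \phi \in L^1(\mu_1)$, dominated convergence gives
\begin{equation*}
\int g\, \phi\, d\mu_1 \;=\; \left(\int g\, d\mu_1\right)\!\left(\int \phi\, d\mu_1\right) \;=\; \int g\, d\mu_1.
\end{equation*}
Hence $\int g\, d\mu_2 = \int g\, d\mu_1$ for every bounded measurable $g$, so $\mu_1 = \mu_2$.

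The only delicate step is the dominated convergence at the end: I need to verify that the ergodic averages inside the integral are dominated by an $L^1(\mu_1)$ function, which is handled by the uniform bound $\|g\|_\infty$ combined with $\phi \in L^1(\mu_1)$. The rest is purely formal manipulation of the Radon--Nikodym identity and the invariance relation $\mu_2 \circ T^{-1} = \mu_2$. Note that invertibility of $T$ is not needed anywhere, which is important because in the paper $T$ is an endomorphism of $\TT^n$, generally non-invertible.
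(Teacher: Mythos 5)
Your argument is correct and complete: the chain Radon--Nikodym $\Rightarrow$ invariance identity $\int g\,\phi\,d\mu_1=\int(g\circ T^k)\phi\,d\mu_1$ $\Rightarrow$ Birkhoff averages $\Rightarrow$ dominated convergence (with dominating function $\|g\|_\infty\phi\in L^1(\mu_1)$) $\Rightarrow$ $\int g\,d\mu_2=\int g\,d\mu_1$ for all bounded measurable $g$ does prove $\mu_1=\mu_2$, and every step is valid for non-invertible $T$. Note, however, that the paper does not prove this statement at all: it is quoted verbatim from Cornfeld--Fomin--Sinai \cite[Page 15, Theorem 2(1)]{Sinai}, so there is no in-paper proof to compare against. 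Relative to the standard textbook route --- which shows that the density $\phi=d\mu_2/d\mu_1$ is $T$-invariant $\mu_1$-a.e.\ and then uses ergodicity of $\mu_1$ to conclude $\phi$ is constant, hence $\equiv 1$ --- your averaging argument trades that step for Birkhoff plus dominated convergence. This is a reasonable trade: verifying $\phi\circ T=\phi$ a.e.\ is immediate for automorphisms but requires a small conditional-expectation argument when $T$ is only an endomorphism, and your route sidesteps that entirely, which is exactly the setting (non-invertible toral endomorphisms) in which the theorem is applied in this paper.
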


\noindent{\bf Proof of Proposition \ref{transfer metric to measure}:}
Let $A\in M_n(\mathbb{Z})$ induce an ergodic endomorphism on an $n$-torus.
Assume that the periodic orbits $\{O_k\}_{k\in\NN}$ are uniformly distributed with constant $C>0$. Take a subsequence $\mu_{n_k}$ of $\mu_k$ such that $\mu_{n_k}\to \mu\ (k\to +\infty)$. Then $\mu$ is an invariant probability measure of $A$. In the following we firstly prove that $\mu$ is absolutely continuous with respect to Leb by the uniform distribution of $\{O_k\}_{k\in\NN}$.
According to Theorem \ref{absolutely continuous}, we know that $\mu = {\rm Leb}$, since $A$ is ergodic w.r.t Leb.

Consider the semi-ring $\mathcal{C}=\{(a,b]:a,b\in\mathbb{R}^n, b-a\in [0,1]^n\}$ on $\mathbb{T}^n$, which generates the Borel $\sigma$-algebra of $\TT^n$, i.e., $\sigma(\mathcal{C})=\mathscr{B}(\mathbb{T}^n)$. From the proof of Caratheodory extension theorem, we know that for any $B\in \mathscr{B}(\mathbb{T}^n)$,
\begin{eqnarray*}
\mu(B)&=&\inf \left\{\sum_{k=1}^{+\infty}\mu(B_k):B_k\in \mathcal{C}, B\subset \bigcup_{k=1}^{+\infty}B_k\right\}, \\
\mathrm{Leb}(B)&=&\inf \left\{\sum_{k=1}^{+\infty}{\rm Leb}(B_k):B_k\in \mathcal{C}, B\subset \bigcup_{k=1}^{+\infty}B_k\right\}.
\end{eqnarray*}
We claim that for any $R\in \mathcal{C}$,  $\mu(R)\leq C_1C^{-1}{\rm Leb}(R)$, $C_1=4^nn^{\frac{n}{2}}$. Then we can get that $\mu(B)\leq C_1C^{-1}{\rm Leb}(B)$ for any Borel measurable set $B$, which implies that $\mu$ is absolutely continuous with respect to Leb.

Now we prove the claim. Denote $$R=\prod_{i=1}^n(a_i,b_i]\subset U=\prod_{i=1}^n(a_i,c_i) \quad (b_i<c_i).$$
Divide $(a_i,c_i)$ into several parts such that each one has length $\frac{1}{2\sqrt{n}}d(O_{n_k})$. Notice that
$$\prod_{i=1}^n\left(\left[\frac{c_i-a_i}{\frac{1}{2\sqrt{n}}d(O_{n_k})}\right]+1\right)\leq \prod_{i=1}^n\frac{2(c_i-a_i)}{\frac{1}{2\sqrt{n}}d(O_{n_k})}=C_1d(O_{n_k})^{-n}{\rm Leb}(U).$$
There are at most $C_1d(O_{n_k})^{-n}{\rm Leb}(U)$ numbers of blocks. Since the diameter of each block is smaller than $\frac{1}{2}d(O_{n_k})$, every block contains at most one element of $O_{n_k}$.
So $$\mu_{n_k}(U)\leq C_1d(O_{n_k})^{-n}{\rm Leb(U)}T(O_{n_k})^{-1}\leq C_1C^{-1}{\rm Leb}(U).$$
Thus
$$\mu(R)\leq \mu(U)\leq \liminf_{k\to +\infty}\mu_{n_k}(U)\leq C_1C^{-1}{\rm Leb}(U).$$
Letting $U\to R$, we get that $\mu(R)\leq C_1C^{-1}{\rm Leb}(R)$.\\
\indent This completes the proof of the claim and Proposition \ref{transfer metric to measure}.

\qed

To finish the proof of Corollary \ref{iff}, we need the following lemma.
\begin{lemma}\label{period}
Assume that $D\in M_s(\ZZ)$, $\det(D)\neq 0$ and $\det(xI-D)$ is irreducible on $\QQ$. If there is a root $r$ of $\det(xI-D)$ which is also a root of unity, then $D$ is periodic.
\end{lemma}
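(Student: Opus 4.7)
The plan is to identify the characteristic polynomial with a cyclotomic polynomial and then invoke Cayley--Hamilton.

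First I would recall that a root of unity $r$ of multiplicative order $m$ has minimal polynomial over $\QQ$ equal to the $m$-th cyclotomic polynomial $\Phi_m(x)$, which is monic and irreducible in $\ZZ[x]$. Since by hypothesis $f(x):=\det(xI-D)$ is monic, irreducible on $\QQ$, and has $r$ as a root, $\Phi_m(x)$ must divide $f(x)$ in $\QQ[x]$. Both polynomials being monic and irreducible over $\QQ$, they coincide: $f(x)=\Phi_m(x)$. In particular $\deg\Phi_m = s$.

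Next, by Cayley--Hamilton $f(D)=0$, so $\Phi_m(D)=0$. Using the standard factorization
$$x^m-1 = \prod_{d\mid m}\Phi_d(x)$$
in $\ZZ[x]$, we have $\Phi_m(x)\mid x^m-1$, say $x^m-1=\Phi_m(x)q(x)$ with $q(x)\in\ZZ[x]$. Substituting $D$ gives
$$D^m-I = \Phi_m(D)\,q(D) = 0,$$
hence $D^m=I$. Therefore $D$ is periodic (with period dividing $m$), which is exactly what is to be proved.

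I do not expect any genuine obstacle here; the only subtlety is making sure the minimal polynomial of $r$ is indeed $\Phi_m$ (which uses irreducibility of $\Phi_m$ over $\QQ$) and then matching it with $f(x)$, which is possible because $f$ is monic and irreducible. The hypotheses that $\det(D)\neq 0$ and $D\in M_s(\ZZ)$ are not needed for the conclusion $D^m=I$, but the integrality of $D$ will be used implicitly in the intended application (Corollary \ref{iff}) to ensure we stay within the torus setting.
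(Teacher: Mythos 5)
Your proof is correct and follows essentially the same route as the paper: both arguments hinge on identifying the characteristic polynomial $\det(xI-D)$ with the $m$-th cyclotomic polynomial $\Phi_m(x)$ using monicity and irreducibility. The only difference is in the concluding step: the paper diagonalizes $D$ over $\CC$ (noting the $\varphi(m)$ eigenvalues are distinct primitive $m$-th roots of unity) and reads off $D^m=I$, whereas you invoke Cayley--Hamilton together with $\Phi_m(x)\mid x^m-1$, which is a marginally cleaner way to reach the same conclusion.
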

\begin{proof}
Suppose that there is a root $r$ of $g(x)=\det(xI-D)$ such that $r^m=1$ but $r^k\neq 1$ for any $1\leq k<m$. Then $r^i, 0\leq i<m$ are all the roots of $x^m-1$, i.e., $r$ is a primitive $m$th root of unity. Since $g(x)$ is irreducible, $g(x)$ must equal to $\prod_{1\leq i<m,(i,m)=1}(x-r^i)$. Thus $D$ has $\varphi(m)$ numbers of different eigenvalues and all of them are primitive $m$th roots of unity. So $D$ is similar to a diagonal matrix which consists of primitive $m$th roots of unity. It implies that $D^m=I$.
\end{proof}
\begin{lemma}\label{similar}
Suppose that $\zeta$ is a root of unity, and $g(x)=x^s-d_{s-1}x^{s-1}-\cdots-d_0$ is the minimal polynomial of $\zeta$ on $\ZZ$. Let $D$ be the companion matrix of $g(x)$. If $\zeta$ is a root of $f(x)=\det(xI-A)$, then there are $P\in M_n(\ZZ)$, $\det(P)\neq 0$, $B\in M_{n-s}(\ZZ)$ and $C\in M_{(n-s)\times s}(\ZZ)$ such that $PA=\left(\begin{matrix}B&C\\0&D\end{matrix}\right)P$.
\end{lemma}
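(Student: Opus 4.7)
The plan is to build $P$ as a stacked matrix with top block $P_1\in M_{(n-s)\times n}(\ZZ)$ and bottom block $P_2\in M_{s\times n}(\ZZ)$: the companion matrix $D$ will appear in the lower-right corner tautologically once $P_2A=DP_2$ is arranged, and the remaining block $C$ will be made integral by a final rescaling of $P_1$.

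First I would construct $P_2$. Because $\zeta$ is an eigenvalue of $A$ and $g(\zeta)=0$, the matrix $g(A)$ is singular, so $g(A^T)$ has a nontrivial right kernel; clearing denominators yields $e\in\ZZ^n\setminus\{0\}$ with $eg(A)=0$. By irreducibility of $g$, the annihilator $\{q(x)\in\QQ[x]:eq(A)=0\}$ equals $(g(x))$ (arguing as in the proof of Lemma \ref{invertible}), and hence $e,eA,\dots,eA^{s-1}$ are $\QQ$-linearly independent. Taking these vectors as the rows of $P_2$ and using $eA^s=d_0e+d_1eA+\cdots+d_{s-1}eA^{s-1}$ gives the companion identity $P_2A=DP_2$ directly.

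Next I would build $P_1$. Let $W\subset\QQ^n$ be the $\QQ$-span of the rows of $P_2$; then $W$ is right-$A$-invariant, and $W\cap\ZZ^n$ is a saturated $A$-invariant sublattice of $\ZZ^n$ of rank $s$, so the quotient $\ZZ^n/(W\cap\ZZ^n)$ is a free abelian group of rank $n-s$ on which $A$ induces a $\ZZ$-linear endomorphism represented, in any chosen $\ZZ$-basis, by some $\bar B\in M_{n-s}(\ZZ)$. Lift that basis to vectors $p_1,\dots,p_{n-s}\in\ZZ^n$ and let $P_1$ have these rows; then $P_1A-\bar BP_1=W_0$ is an integer matrix whose rows lie in $W\cap\ZZ^n$.

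The delicate step is the integrality of $C$: writing $W_0=CP_2$ gives, a priori, only $C\in M_{(n-s)\times s}(\QQ)$, because the $\ZZ$-span $\Lambda_2$ of the rows of $P_2$ is generally only a finite-index sublattice of $W\cap\ZZ^n$, and this gap is a genuine obstruction that cannot be closed by re-choosing $P_2$. I would bypass it by scaling: pick $N=[W\cap\ZZ^n:\Lambda_2]$ (or any multiple) and replace $P_1$ by $NP_1$. This leaves $\bar B$ fixed and multiplies $W_0$ by $N$, forcing its rows into $\Lambda_2$, so that $NW_0=CP_2$ with $C\in M_{(n-s)\times s}(\ZZ)$. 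Setting $P=\left(\begin{matrix}NP_1\\ P_2\end{matrix}\right)$ then satisfies $PA=\left(\begin{matrix}\bar B & C\\ 0 & D\end{matrix}\right)P$, and $\det P=N^{n-s}\det\left(\begin{matrix}P_1\\ P_2\end{matrix}\right)\neq 0$ since the rows of $P_1$ project to a $\QQ$-basis of $\QQ^n/W$ while those of $P_2$ form a $\QQ$-basis of $W$. The lemma's allowance of $\det P\neq 0$, rather than $|\det P|=1$, is precisely what makes this scaling trick succeed.
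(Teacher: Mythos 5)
Your proof is correct, and it differs from the paper's in two meaningful ways. The paper produces the vector $e$ with $eg(A)=0$ by invoking the Structure Theorem for Modules over Polynomial Rings (Theorem~\ref{STMPR}(1)): the divisibility chain $d_1\mid d_2\mid\cdots\mid d_r$ forces $g\mid d_r$, and $e=v_rh(A)$ with $d_r=gh$. You instead observe that $g(\zeta)=0$ together with $\zeta$ being an eigenvalue of $A$ makes $g(A)$ singular, take $e$ in the integer left kernel, and deduce from irreducibility of $g$ that the annihilator is exactly $(g)$ — this is more elementary and bypasses the module-theoretic machinery entirely. More importantly, you explicitly address the integrality of $B$ and $C$, which the paper's proof elides: the paper simply picks $\alpha_i\in\ZZ^n$ extending $e,eA,\dots,eA^{s-1}$ to a $\QQ$-basis and asserts the block form without checking that the change-of-basis coefficients lie in $\ZZ$. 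Your construction — lifting a $\ZZ$-basis of the free quotient $\ZZ^n/(W\cap\ZZ^n)$ to get integral $\bar B$, then scaling $P_1$ by the index $N=[W\cap\ZZ^n:\Lambda_2]$ to push $C$ into $\ZZ$ — closes this gap cleanly and correctly notes that the freedom $\det P\neq 0$ (rather than $|\det P|=1$) is what makes the rescaling admissible. (Worth remarking: for the paper's downstream use in Corollary~\ref{iff}, only $P$ and $D$ actually need to be integral, since the argument projects to the bottom block; but the lemma as stated does claim $B,C\in M_\ZZ$, and your proof is the one that actually establishes it.)
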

\begin{proof}
If we consider $d(x)v$ as $vd(A)$, then
there is a decomposition of $V=\QQ^n$ as $$V=\QQ[x]v_1\oplus \QQ[x]v_2\oplus \cdots \oplus\QQ[x]v_r$$
such that $\{d(x):v_i d(A)=0\}=(d_i(x))$, $d_1(x)|d_2(x)|\cdots|d_r(x)$ (See Theorem \ref{STMPR}(1)). Here $f(x)=\prod_{1\leq i\leq r}d_i(x)$. Without loss of generality, we can assume that $v_i\in\ZZ^n$ and $d_i(x)\in\ZZ[x]$.
Since $f(\zeta)=0$ and $d_1(x)|d_2(x)|\cdots|d_r(x)$, there is some $i$ such that $d_i(\zeta)=0$, thus $d_r(\zeta)=0$. There is $h(x)\in\ZZ[x]$ such that $d_r(x)=g(x)h(x)$. Find $\alpha_i\in\ZZ^n (1\leq i\leq n-s)$ such that $$\{\alpha_1,\cdots, \alpha_{n-s},v_rh(A),v_rh(A)A,\cdots,v_rh(A)A^{s-1}\}$$ is a basis of $\QQ^n$. Denote by
$$P=\left(\begin{matrix}\alpha_1\\\cdots \\ \alpha_{n-s}\\v_rh(A)\\\cdots\\v_rh(A)A^{s-1}\end{matrix}\right)$$
Then $PA=\left(\begin{matrix}B&C\\0&D\end{matrix}\right)P$.
\end{proof}

{\noindent\bf Proof of Corollary \ref{iff}.} According to Proposition \ref{transfer metric to measure}, to finish the proof of Corollary \ref{iff}, we only have to show that if $A$ is not ergodic, then there does not exist any sequence of periodic measures converging to Leb.

Suppose on the contrary that there are periodic orbits $O_n=\orb(v_n,A)$ with period $T_n$ such that $$\frac{1}{T_n}\sum_{i=0}^{T_n-1}\delta_{A^iv_n}\to {\rm Leb}\quad (n\to +\infty).$$
Since $A$ is not ergodic with respect to Leb, there are $\zeta$ and $m$ such that $f(\zeta)=0$, $\zeta^m=1$ but $\zeta^k\neq 1$ for any $1\leq k<m$, where $f(x)=\det(xI-A)$. Suppose that $g(x)=x^s-d_{s-1}x^{s-1}-\cdots-d_0$ is the minimal polynomial of $\zeta$ on $\ZZ$. By Lemma \ref{similar}, there is $P\in M_n(\ZZ)$, $\det(P)\neq 0$ such that $PA=EP$, where $E=\left(\begin{matrix}B&C\\0&D\end{matrix}\right)$. Since $\zeta$ is a primitive $m$th root of unity, by Lemma \ref{period}, $D^m=I$. Denote $$\pi:\RR^{n-s}\times\RR^s\to \{0\}\times \RR^s, \quad \pi(u,v)=v.$$ Write $$Pv_n=\left(\begin{array}{c}u_n\\ w_n\end{array}\right), \quad w_n\in\QQ^s.$$
Since the minimal period of $w_n$ under $D$ is a common factor of $m$ and $T_n$,
\begin{equation*}
\begin{split}
\frac{1}{m}\sum_{i=0}^{m-1}\delta_{D^iw_n}&=\frac{1}{T_n}\sum_{i=0}^{T_n-1}\delta_{D^iw_n}\\
&=\frac{1}{T_n}\sum_{i=0}^{T_n-1}\pi_{*}\delta_{E^iPv_n}\\
&=\frac{1}{T_n}\sum_{i=0}^{T_n-1}\pi_{*}\delta_{PA^iv_n}\\
&\to \pi_{*}P_{*}{\rm Leb}\quad(n\to +\infty).
\end{split}
\end{equation*}
But the average of $m$ dirac measures can not converge to any non-atomic measure. This contradiction finishes the proof Corollary \ref{iff}.
\qed
\begin{remark}
One may use the ``weak specification'' of ergodic endomorphism on the torus (\cite{Marcus}) to give an alternative proof for the ``only if'' part of Corollary \ref{iff}.
\end{remark}

\section{References}
\begingroup
\renewcommand{\section}[2]{}

\endgroup

\noindent Daohua Yu, School of Mathematical Sciences, Peking University, Beijing 100871, China\\
email: yudh@pku.edu.cn

\noindent Shaobo Gan, School of Mathematical Sciences, Peking University, Beijing 100871, China\\
email: gansb@pku.edu.cn


\begin{thebibliography}{99}
\bibitem{A}
\newblock Anosov, D. V.
\newblock Geodesic flows on closed Riemann manifolds with negative curvature.
\newblock Translated from the Russian by S. Feder.
Proceedings of the Steklov Institute of Mathematics, No. 90 (1967)
American Mathematical Society, Providence, RI, 1969. iv+235 pp.


\bibitem{ADM}
\newblock Andreescu, T.; Dospinescu, G.; Mushkarov, O.
\newblock Number theory: concepts and problems.
\newblock XYZ press, LLC (2017)

\bibitem{Artin}
\newblock Artin, M.
\newblock Algebra.
\newblock Prentice Hall, Inc., Englewood Cliffs, NJ (1991).

\bibitem{Bright}
\newblock Bright, C.
\newblock Modular periodicity of linear recurrence sequences.\\
\newblock https://cs.curtisbright.com/reports/PM434Project.pdf

\bibitem{Sinai}
\newblock Cornfeld, I. P.; Fomin, S. V.; Sinai, Ya. G.
\newblock Ergodic theory.
\newblock Translated from the Russian by A. B. Sosinskii
\newblock Grundlehren Math. Wiss., 245 [Fundamental Principles of Mathematical Sciences]
Springer-Verlag, New York (1982).

\bibitem{DGS}
\newblock D\'{i}az, L. J.; Gelfert, K.; Santiago, B.
\newblock Weak* and entropy approximation of nonhyperbolic measures: a geometrical approach.
\newblock Math. Proc. Cambridge Philos. Soc. {\bf 169} (3), 507-545. (2020)

\bibitem{HLMXZ}
\newblock Huang, W.; Lian, Z.; Ma, X.; Xu, L.; Zhang, Y.
\newblock Ergodic optimization for a class of typical maps.
\newblock arXiv:1904.01915 (2019)

\bibitem{Katok}
\newblock Katok, A.
\newblock Lyapunov exponents, entropy and periodic orbits for diffeomorphisms.
\newblock Inst. Hautes \'{E}tudes Sci. Publ. Math., no.51, 137-173. (1980)

\bibitem{LST}
\newblock Liao, G.; Sun, W.; Tian, X.
\newblock Metric entropy and the number of periodic points.
\newblock Nonlinearity {\bf 23} (7), 1547-1558. (2010)

\bibitem{Lang}
\newblock Lang, S.
\newblock Algebra.
\newblock Revised third edition. Grad. Texts in Math., 211. Springer-Verlag, New York, 2002. xvi+914 pp.

\bibitem{Marcus}
\newblock Marcus, B.
\newblock A note on periodic points for ergodic toral automorphisms.
\newblock Monatsh. Math. {\bf 89} (2), 121-129. (1980)


\bibitem{density2}
\newblock Rosen, M.
\newblock Polynomials modulo $p$ and the theory of Galois sets.
\newblock Theory and applications of finite fields, 163-178. Contemp. Math., 579. American Mathematical Society, Providence, RI (2012)

\bibitem{Shi}
\newblock Shi, Y.
\newblock Partially hyperbolic diffeomorphisms on Heisenberg nilmanifolds and holonomy maps.
\newblock C. R. Math. Acad. Sci. Paris {\bf 352} (9), 743-747. (2014)

\bibitem{Sigmund}
\newblock Sigmund, K.
\newblock Generic properties of invariant measures for Axiom A diffeomorphisms.
\newblock Invent. Math. {\bf 11}, 99-109. (1970)

\bibitem{recurrence}
\newblock Ward, M.
\newblock The arithmetical theory of linear recurring series.
\newblock Trans. Amer. Math. Soc. {\bf 35} (3), 600-628. (1933)

\end{thebibliography}
\end{document}